\documentclass[12pt]{amsart}

\usepackage{epic,amsfonts,color,epsfig,amssymb,latexsym,amsmath,hyperref}
\usepackage[super]{nth}
\usepackage{color}

\setlength{\textwidth}{6.5in}
\setlength{\textheight}{9in}
\setlength{\topmargin}{-.25in}
\setlength{\oddsidemargin}{0in}
\setlength{\evensidemargin}{0in}

\newtheorem{thm}{Theorem}[section]

\newtheorem{cor}[thm]{Corollary}
\newtheorem{lem}[thm]{Lemma}

\newtheorem{ex}[thm]{Example}

\newtheorem{dfn}[thm]{Definition}

\newtheorem{rem}{Remark}[section]

\newcommand{\abs}[1]{\left\lvert#1\right\rvert}

\newcommand{\norm}[1]{\left\lVert#1\right\rVert}
\newcommand{\Int}{\int\limits}

\author{Alperen A. Erg\"{u}r}
\email{erguer@math.tu-berlin.de}
\address{Technische Universit\"at Berlin, Institut f\"ur Mathematik, Sekretariat MA 3-2, Straße des 17. Juni 136, 10623, Berlin, Germany }
\thanks{A.E.\ was partially supported by Einstein Foundation, Berlin.}
\title[Approximating Nonnegative Polynomials via Spectral Sparsification]{\mbox{} \\ 
\vspace{-1in} Approximating Nonnegative Polynomials via Spectral Sparsification}

\begin{document}

\maketitle

\vspace{-.4in}

\begin{abstract} 
We study polyhedral approximations to the cone of nonnegative polynomials. We show that any constant ratio polyhedral approximation to the cone of nonnegative degree $2d$ forms in $n$ variables has to have exponentially many facets in terms of $n$. We also show that for  fixed $m \geq 3$, all linear $m$ dimensional sections of the nonnegative cone that include $(x_1^2+x_2^2+\ldots + x_n^2)^d$ has a costant ratio polyhedral approximation with $O(n^{m-2})$ many facets. Our approach is convex geometric, and parts of the argument rely on the recent solution of Kadison-Singer problem. We also discuss a randomized polyhedral approximation which might be of independent interest.  
\end{abstract}

\begin{center}
  \emph{Dedicated  to Nuriye and Semih, and to thousands of souls longing for justice.}
\end{center}

\section{Introduction}

Let $P_{n,2d}$ be the vector space of  real homogenous degree $2d$ polynomials in $n$ variables. The elements of $P_{n,2d}$ that are nonnegative on the sphere form a full dimensional cone. Membership problem of this cone is algorithmically equivalent to global optimization of polynomials.  In the case of quadratics ($d=1$), membership of the cone can be checked efficiently. Starting with the case $d=2$, membership problem is NP-Hard \cite{ahmadi}.  

In general, it also seems hard to provide upper bounds for the complexity of nonnegative cone membership problem. However, a very interesting result of  Pebay, Rojas and Thompson shows that for any fixed $\delta > 0$, deciding if the supremum of a polynomial with $n$ variables and $n+n^{\delta}$ monomials exceeds a certain given number is $NP_{\mathbb{R}}$-complete  \cite{roj1}. Indeed, it is now clear that the complexity of membership problem for the cone of nonnegative polynomials is quite different in the case of sparse polynomials than the case of arbitrary degree $2d$ homogenous forms (dense polynomials). We do not intend to survey the literature on structured polynomial optimization here, we refer the reader to the work of De Wollf and Iliman, and the references therein \cite{timo1, timo2}.   

In this note, we are interested in polyhedral approximations to the cone of nonnegative polynomials. Discussion in the previous paragraph suggests that computational complexity of the approximation will be quite different in the dense and the sparse cases. Our results confirm this intuition as follows: We show that any constant ratio polyhedral approximation in the dense case has to have exponentially many facets. We also show that for any subspace $E \subset P_{n,2d}$ with $\dim(E)=m$ and $(x_1^2+x_2^2+\ldots+x_n^2)^d \in E$, there exists a polyhedral cone with $O(n^{m-2})$ many facets which provides a constant ratio approximation to the nonnegative elements of $E$.

We need to introduce some notation to resume. We denote the cone of nonnegative degree $2d$ forms in $n$ variables by $Pos_{n,2d}$. 

$$ Pos_{n,2d} : = \{ f \in P_{n,2d} : f(x) \geq 0 \; \text{for all} \; x \in S^{n-1}  \}$$

\noindent It is more convenient to work with a compact convex base of $Pos_{n,2d}$ instead of the unbounded cone itself. Note the following simple observation: for all $f \in Pos_{n,2d}$, we trivially have 

$$\Int_{S^{n-1}} f(x) \; \sigma(x) \geq 0 $$

\noindent where $\sigma$ is the uniform measure on $S^{n-1}$ with $\sigma(S^{n-1})=1$. This simple observation naturally suggests the following convex body $\widetilde{P}os_{n,2d}$ as a base for the nonnegative cone.

$$ \widetilde{P}os_{n,2d} := \{ f \in Pos_{n,2d} : \Int_{S^{n-1}} f(x) \; \sigma(x) = 1 \} $$

\noindent Approximating $\widetilde{P}os_{n,2d}$ with polytopes is equivalent to approximating $Pos_{n,2d}$ with polyhedral cones. Hence, in the rest of this note we are concerned with the polytope approximations to  $\widetilde{P}os_{n,2d}$. We begin with the familiar example of the cone of positive semidefinite matrices (PSD cone). 

\begin{ex}(The Cone of Nonnegative Quadratic Forms) For the case $d=1$, $Pos_{n,2}$ is the cone of nonnegative quadratics, or equivalently the PSD cone. We denote the trace of a matrix with $Tr$. Then, $ \widetilde{P}os_{n,2}$ can be expressed as follows.

$$ \widetilde{P}os_{n,2} := \{ Q \in Pos_{n,2} :  Tr(Q)=n \}$$

\noindent Now, let $c > 1$ be a constant and assume that $K$ is polytope with the following property.

$$ \widetilde{P}os_{n,2} - \mathbb{I}_n \subseteq K \subseteq c ( \widetilde{P}os_{n,2} -\mathbb{I}_n) $$ 

\noindent It follows from the work of Pokutta et al \cite{pokutta1, pokutta2} that $K$ has exponentially many facets.
\end{ex}

A spectrahedron is the intersection of the PSD cone with an affine linear space. Semidefinite programing methods optimize a linear objective function over a spectrahedron (see for instance Chapter 2 of \cite{semo}). The well established method of linear programing efficiently optimize a linear objective function over a polyhedron. The example above provides a comparison of the expressive power of semidefinite programing versus linear programing.  This was one of motivations for Pokutta and his collaborators in their work on the approximation limits of linear programing \cite{pokutta1,pokutta2}.  

Our first main theorem below provides an inapproximability result for arbitrary degree $d$. Our proof is direct and simple, and it is based on some basic Gaussian concentration inequalities.

\begin{thm} \label{intro1}
Let $c > 1$ be a constant, let $r=(x_1^2+x_2^2+\ldots+x_n^2)^{d}$, and suppose $K$ is a polytope with the following property.  

$$ \widetilde{P}os_{n,2d} - r \subseteq  K \subseteq c^{d} ( \widetilde{P}os_{n,2d} - r) $$

\noindent Then, $K$ has at least 

$$a_0e^{a_1\frac{n}{cd}}$$

\noindent  many facets where $a_0$ and $a_1$ are absolute constants. 
\end{thm}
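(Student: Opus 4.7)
The plan is to reduce the facet lower bound to a Gaussian concentration estimate via a packing argument on the extreme points of $\widetilde{P}os_{n,2d} - r$. For each $v \in S^{n-1}$ I set $p_v := c_{n,d}(v\cdot x)^{2d}$ with $c_{n,d} = n(n+2)\cdots(n+2d-2)/(2d-1)!!$, the normalization forcing $\int_{S^{n-1}} p_v\, d\sigma = 1$, so $p_v \in \widetilde{P}os_{n,2d}$. Since $(v\cdot x)^{2d}$ spans an extremal ray of the nonnegative cone, $p_v - r$ is an extreme point of the convex body $\widetilde{P}os_{n,2d} - r$, and consequently $c^d(p_v - r)$ is an extreme point of $c^d(\widetilde{P}os_{n,2d} - r)$.

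Next I exploit the polytope structure of $K$ to ``expose'' almost every $v$. Because $K \subseteq c^d(\widetilde{P}os_{n,2d} - r)$ and $c^d(p_v - r)$ is extreme in the larger body, $c^d(p_v - r) \in K$ would force $c^d(p_v - r)$ to be a vertex of $K$; since $K$ has only finitely many vertices while $v$ ranges over $S^{n-1}$, we have $c^d(p_v - r) \notin K$ for $\sigma$-almost every $v$. For each such $v$ some facet halfspace $\{\ell_j \leq 1\}$ of $K$ is violated by $c^d(p_v - r)$, i.e.\ $\ell_j(p_v - r) > c^{-d}$. Each facet functional $\ell_j$ lies in the polar of $\widetilde{P}os_{n,2d} - r$: from $K \supseteq \widetilde{P}os_{n,2d} - r$ one reads off $\ell_j(p - r) \leq 1$ for every $p \in \widetilde{P}os_{n,2d}$.

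The core technical step is a uniform tail bound on $v \mapsto \ell_j(p_v - r)$. This map is a polynomial of degree $2d$ in $v$ with $\sigma$-mean zero (from $\int_{S^{n-1}} p_v\, d\sigma(v) = r$) and bounded above by $1$ on $S^{n-1}$. I would pass to a Gaussian via $v = g/|g|$ with $g \sim N(0, I_n)$ and invoke concentration for polynomials of degree $2d$ in Gaussian variables --- the Hanson--Wright inequality when $d = 1$, its hypercontractive generalization for $d \geq 2$ --- to obtain
\[
\sigma\bigl\{v \in S^{n-1} : \ell_j(p_v - r) > c^{-d}\bigr\} \leq A\, e^{-a_1 n/(cd)}
\]
for absolute constants $A, a_1 > 0$. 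A union bound over the $N$ facets then closes the loop:
\[
1 \;=\; \sigma\{v : c^d(p_v - r) \notin K\} \;\leq\; \sum_{j=1}^N \sigma\{v : \ell_j(p_v - r) > c^{-d}\} \;\leq\; N A\, e^{-a_1 n/(cd)},
\]
yielding $N \geq A^{-1} e^{a_1 n/(cd)}$.

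The substantive obstacle is the concentration step itself: the bound must hold \emph{uniformly} over every polar functional $\ell_j$, including those whose maximum on $\widetilde{P}os_{n,2d} - r$ is attained on a positive-dimensional face rather than at a single $p_v - r$. Extracting the right effective variance for $\ell_j(p_v - r)$ from the one-sided polar inequality $\ell_j(p - r) \leq 1$ alone, and matching the exponent $n/(cd)$ precisely, is the place where the ``basic Gaussian concentration inequalities'' promised in the introduction must do the real work.
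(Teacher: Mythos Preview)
Your reduction in steps 1--4 is clean and correct: the rank-one forms $p_v=c_{n,d}(v\cdot x)^{2d}$ are indeed extreme in $\widetilde{P}os_{n,2d}$, so for $\sigma$-almost every $v$ the point $c^d(p_v-r)$ lies outside $K$ and is therefore cut off by some facet inequality $\ell_j\le 1$, with $\ell_j$ in the polar of $\widetilde{P}os_{n,2d}-r$. The argument breaks at step~6: the uniform tail bound
\[
\sigma\bigl\{v:\ell_j(p_v-r)>c^{-d}\bigr\}\le A\,e^{-a_1 n/(cd)}
\]
is \emph{false} for general polar functionals, for exactly the reason you flag in your last paragraph. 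Take $d=1$ and the functional $\ell(Q-I)=1-Q_{11}$ on trace-zero quadratics; one checks directly that $\ell$ lies on the boundary of the polar (so it can be realised as a facet of some admissible $K$), and with your $p_v=n\,vv^T$ one gets $\ell(p_v-r)=1-nv_1^2$. Hence
\[
\sigma\{\ell(p_v-r)>c^{-1}\}=\sigma\{nv_1^2<1-c^{-1}\}\approx \mathbb{P}\{\chi_1^2<1-c^{-1}\},
\]
a constant independent of $n$. The one-sided bound $\ell_j(p_v-r)\le 1$ gives no control on the $L^2$ norm of $v\mapsto\ell_j(p_v-r)$, and without that, Gaussian hypercontractivity yields nothing here. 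The difficulty is structural: your $p_v$ is \emph{not} the reproducing kernel of $P_{n,2d}$, so $\ell_j(p_v-r)$ is a smoothed version of the polynomial representing $\ell_j$, and the smoothing destroys concentration precisely when $\ell_j$ is maximised on a large face.

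The paper sidesteps this by passing to the dual. Duality turns facets of $K$ into vertices of a polytope $P$ with $B\subseteq P\subseteq c^d B$, where $B=\mathrm{conv}\{p_v^{\mathrm{ker}}-r:v\in S^{n-1}\}$ is built from the reproducing kernels $p_v^{\mathrm{ker}}$. The linear functional is then evaluation at a \emph{Gaussian} point $v$: the lower bound comes from $(p_{\tilde v}^{\mathrm{ker}}-r)(v)=\|v\|^{2d}(N-1)$, and the upper bound on each vertex $f_i$ uses the two-sided estimate $\|f_i\|_2\le c^d\sqrt{N}$ (inherited from $P\subseteq c^dB$ and the John position of $B/\sqrt{N-1}$) together with the standard Gaussian tail for degree-$2d$ polynomials. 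It is this $L^2$ control on the vertices --- unavailable for your facet functionals --- that makes the union bound go through.
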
 
In Theorem \ref{intro1} we assume $d$ is any fixed degree, and one has the liberty to vary $c$ to be any real number greater than $1$. The constants $a_0$ and $a_1$ are some fixed numbers independent of $n$, $d$ and $c$. For instance, if one sets $c=5$, the conclusion of Theorem \ref{intro1} is that any polyhedral approximation to $\widetilde{P}os_{n,2d}$ with accuracy $5^d$ has to have $\Omega\left( \exp(\frac{n}{5d}) \right)$ many facets.

After the inapproximability result for the dense case, we consider polyhedral approximations for structured subspaces of polynomials. For a subspace $E \subset P_{n,2d}$, we denote the cone of nonnegative elements and its compact base as follows.

$$ Pos_{E} : = \{ f \in E : f(x) \geq 0 \; \text{for all} \; x \in S^{n-1}  \}$$

$$ \widetilde{P}os_{E} := \{ f \in Pos_E : \Int_{S^{n-1}} f(x) \; \sigma(x) = 1 \} $$

\noindent We will explain in the third section that for the $\widetilde{P}os_{E}$ definition to be meaningful we need to assume $r = (x_1^2+ x_2^2 + \ldots + x_n^2)^d  \in E$. From this point on and throughout the paper we always assume $r$ is included in the subspaces we consider.  We would like to present an example of such a subspace which is due to  Choi, Lam and Reznick  \cite{reznick}.

\begin{ex}(Even Symmetric Sextics)
Let $M_k(x)=\sum_{i=1}^n x_i^{k}$, and consider the following vector space $E$.
 $$E := span \{ M_6 , M_2M_4, M_2^3 \} $$

\noindent $E \subset P_{n,6}$, and it is the subspace formed by even symmetric forms. Observe that $E$ satisfies our assumption that $r \in E$ since $M_2^3=(x_1^2+x_2^2+\ldots+x_n^2)^3=r$. It follows from the results of Choi, Lam and Reznick  \cite{reznick} that $\widetilde{P}os_{E}$ is precisely a regular $n$-gon.
\end{ex}

In their beautiful paper \cite{reznick}, Choi, Lam and Reznick exploit algebraic properties of even symmetric sextics to conclude that the set of nonnegative elements is precisely the cone over a regular $n$-gon. It is not clear how to generalize their techniques to less structured families of sparse polynomials. 

We consider arbitrary subspaces of forms with the only assumption that the element $r$ is included the subspace. A corollary of our main theorem below is that for any fixed $m \geq 3$ and $n \rightarrow \infty$, all $m$ dimensional sections of $Pos_{n,2d}$ that include the element $r$ is roughly a polyhedral cone with $O(n^{m-2})$ many facets.  

\begin{thm} \label{intro2}
Let $E \subset P_{n,2d}$ be a linear subspace of fixed dimension $m$ where $3 \leq m \leq \frac{n}{e}$. Also assume that $r \in E$. Then, there exists a polytope $K$ with $O(n^{m-2})$ many facets which satisfies the following.

$$ \widetilde{P}os_{E} - r \subseteq  K \subseteq (1+ \frac{n}{m})^{\frac{3m}{n}} ( \widetilde{P}os_{E} - r) $$
 
\end{thm}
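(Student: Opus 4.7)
The plan is to reformulate the theorem via polar duality as a discretization (norming-set) question for the finite-dimensional space $E$, and then to construct a small norming set by means of spectral sparsification. Let $E_0 := \{f \in E : \int_{S^{n-1}} f\,d\sigma = 0\}$, so that $\dim E_0 = m-1$ since $r \in E$ and $\int r\,d\sigma = 1$. Writing every point of $\widetilde{P}os_E$ uniquely as $r + u$ with $u \in E_0$, the body $\widetilde{P}os_E - r$ is identified with $B := \{u \in E_0 : u(x) \geq -1 \text{ for all } x \in S^{n-1}\}$, which is the polar in $E_0$ of $A := \mathrm{conv}\{-\ell_x : x \in S^{n-1}\}$, where $\ell_x \in E_0^*$ denotes evaluation at $x$. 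Any polytope of the form $K_N := \{f \in E : \int f\,d\sigma = 1,\; f(x_i) \geq 0,\; i = 1,\ldots,N\}$ automatically satisfies $\widetilde{P}os_E \subseteq K_N$, and the nontrivial inclusion $K_N - r \subseteq \lambda(\widetilde{P}os_E - r)$ is equivalent to the one-sided discretization inequality
\[
\sup_{x \in S^{n-1}} u(x)^- \;\leq\; \lambda \cdot \max_{1 \leq i \leq N} u(x_i)^- \qquad \text{for every } u \in E_0.
\]
The task thus reduces to finding $x_1, \ldots, x_N$ on $S^{n-1}$ with $N = O(n^{m-2})$ and $\lambda = (1+n/m)^{3m/n}$.

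I would construct the sample points via the Marcus--Spielman--Srivastava solution of the Weaver / Kadison--Singer conjecture. In an $L^2$-orthonormal basis of $E_0$, the uniform measure $\sigma$ yields the isotropic decomposition $\int_{S^{n-1}} \ell_x \otimes \ell_x \,d\sigma = I_{E_0}$. Applied to a sufficiently fine discretization of $S^{n-1}$ (the centers of a near-uniform cap cover), MSS produces a partition into $O(n^{m-2})$ cells, each contributing spectral mass of order $1/N$. Pick one representative $x_i$ per cell. A Bernstein--Markov-type estimate for polynomials on $S^{n-1}$ (exploiting that every $u \in E$ has degree at most $2d$) shows that $u(x_i)$ represents $u$ pointwise throughout its cell up to a small multiplicative loss. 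Balancing the MSS spectral deviation $O(1/\sqrt{N})$, a Nikolskii-type reverse H\"older constant $C(n,m,d)$ converting $L^2$ closeness into $L^\infty$ closeness on $E_0$, and the cell count $N = O(n^{m-2})$ should yield the stated factor $(1+n/m)^{3m/n}$, which is $1 + O\!\left(m\log(n/m)/n\right)$ in the regime $m \ll n$.

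The main obstacle is the quantitative passage from a spectral (operator-norm, $L^2$) sparsifier to the pointwise ($L^\infty$) polyhedral approximation: MSS controls the cell operators only in operator norm on $E_0$, whereas the polyhedral inclusion requires control of each $u \in E_0$ at every single $x \in S^{n-1}$. Exploiting the low degree of the polynomials in $E$, so that evaluation at a cap's center represents the entire cap up to $1/\sqrt{n}$-scale oscillation via a spherical Bernstein inequality, and then carefully tracking the loss against the cell count, is the crux. A simpler randomized variant---sampling $N$ i.i.d.\ uniform points on $S^{n-1}$ and using concentration of polynomial values on the sphere combined with a union bound over an $(m-2)$-dimensional net of extremal directions in $A$---plausibly gives a similar bound and is likely the ``randomized polyhedral approximation'' referred to in the abstract.
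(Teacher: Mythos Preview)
Your dual reformulation is correct and matches the paper's setup exactly: the problem reduces to approximating $B(E)=\mathrm{conv}\{\phi_E(v):v\in S^{n-1}\}\subset U(E)$ by a polytope with few vertices, and spectral sparsification \`a la Kadison--Singer is indeed the engine. However, the execution you outline has a genuine gap and misses the paper's two decisive moves.

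First, the paper does \emph{not} try to pass from an $L^2$ sparsifier to an $L^\infty$ bound via Bernstein--Markov on $S^{n-1}$. That route would inject a dependence on the degree $2d$ (Bernstein constants on the sphere scale with the degree), whereas the stated approximation factor $(1+n/m)^{3m/n}$ is degree-free. Instead, the paper first puts $B(E)$ in John's position in $U(E)\cong\mathbb{R}^{m-1}$. The Friedland--Youssef refinement of MSS then yields $O(m)$ contact points $q_v$ with $(1-\varepsilon)I\preceq \frac{m-1}{|S|}\sum q_v\otimes q_v\preceq(1+\varepsilon)I$, and by pigeonhole this immediately gives a polytope $P$ with $(1-\varepsilon)B_2^{m-1}\subseteq P\subseteq\sqrt{m-1}\,B_2^{m-1}$. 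Since John's position sandwiches $B(E)$ between $\frac{1}{m-1}B_2^{m-1}$ and $B_2^{m-1}$, one obtains $(1-\varepsilon)B(E)\subseteq P\subseteq m^{3/2}B(E)$ with no analysis on the sphere whatsoever. This is how the ``main obstacle'' you flag is sidestepped.

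Second, and more importantly, the step from the crude factor $m^{3/2}$ with $O(m)$ facets to the sharp factor $(1+n/m)^{3m/n}$ with $O(n^{m-2})$ facets is \emph{not} done by partitioning $S^{n-1}$ into finer cells. The paper uses a tensorization trick: replace $B(E)$ by $B(E)^{\otimes k}=\mathrm{conv}\{\phi_E(v)^{\otimes k}:v\in S^{n-1}\}$ inside $\mathrm{Sym}(U(E)^{\otimes k})$, a space of dimension $\binom{m+k-2}{k}$. Running the same John's-position-plus-sparsification argument there gives, for every $q\in U(E)$,
\[
(1-\varepsilon)\max_{g\in B(E)}\langle g,q\rangle^k \;\le\; \max_{v\in S_k}|\langle f_v,q\rangle|^k \;\le\; \binom{m+k-2}{k}^{3/2}\max_{g\in B(E)}\langle g,q\rangle^k,
\]
because $\langle g^{\otimes k},q^{\otimes k}\rangle=\langle g,q\rangle^k$. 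Taking $k$th roots compresses the approximation factor to $\binom{m+k-2}{k}^{3/(2k)}$, and choosing $k=n$ yields both the facet count $O(n^{m-2})$ and the factor $(1+n/m)^{3m/n}$ via Stirling. Your proposal has no analogue of this step; the cell-partition-plus-Bernstein mechanism you sketch is not how the $O(n^{m-2})$ arises, and you yourself note the argument is incomplete at exactly this juncture.
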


For the special case of quadratics, Theorem \ref{intro2} yields the following corollary.

\begin{cor}
Let $E$ be an $m$ dimensional affine linear space of real $n \times n$ matrices with $m \leq \frac{n}{e}$ and $\mathbb{I}_n \in E$. We denote the spectrahedron defined by the intersection of the PSD cone with $E$ by $Pos_E$. We define the following base $\widetilde{P}os_E$ for the spectrahedron $Pos_E$.

$$ \widetilde{P}os_E := \{ Q \in Pos_E :  Tr(Q)=n \} $$

\noindent Then, there exists a polytope $K$ with $O(n^{m-2})$ facets which satisfies the following inclusions.

$$ \widetilde{P}os_{E} - \mathbb{I}_n \subseteq  K \subseteq (1+\frac{n}{m})^{\frac{3m}{n}} ( \widetilde{P}os_{E} - \mathbb{I}_n ) $$
\end{cor}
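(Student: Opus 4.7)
The plan is to derive this corollary directly from Theorem \ref{intro2} specialized to $d=1$, by pushing the standard identification between homogeneous quadratic forms and symmetric matrices through every quantity appearing in the statement. Concretely, I would fix the bijection $Q \leftrightarrow f_Q$ where $f_Q(x) = x^T Q x$, which is a linear isomorphism between symmetric $n \times n$ matrices and $P_{n,2}$. Under this identification, an affine subspace $E$ of symmetric matrices corresponds to a linear subspace $\widetilde{E} \subset P_{n,2}$ of the same dimension $m$, and $Pos_E$ in the matrix sense (PSD matrices in $E$) corresponds exactly to $Pos_{\widetilde{E}}$ in the sense of the paper, since $f_Q \geq 0$ on $S^{n-1}$ iff $Q$ is PSD.

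Next I would verify that the two normalizations coincide. For $r = x_1^2 + \cdots + x_n^2$ one has $r = f_{\mathbb{I}_n}$, so the hypothesis $\mathbb{I}_n \in E$ translates to $r \in \widetilde{E}$, matching the assumption of Theorem \ref{intro2}. A short computation gives
\[
\Int_{S^{n-1}} f_Q(x)\, \sigma(x) = \sum_{i,j} Q_{ij} \Int_{S^{n-1}} x_i x_j\, \sigma(x) = \frac{Tr(Q)}{n},
\]
using the well-known fact that $\int_{S^{n-1}} x_i x_j\, \sigma(x) = \delta_{ij}/n$. Thus the condition $Tr(Q) = n$ defining $\widetilde{P}os_E$ in the corollary is precisely the condition $\int_{S^{n-1}} f_Q(x)\, \sigma(x) = 1$ defining $\widetilde{P}os_{\widetilde{E}}$ in Theorem \ref{intro2}, and the bijection $Q \mapsto f_Q$ identifies these bases.

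Having set up the identification, I would apply Theorem \ref{intro2} with $d=1$ to the subspace $\widetilde{E}$, producing a polytope $\widetilde{K} \subset \widetilde{E} - r$ with $O(n^{m-2})$ facets and
\[
\widetilde{P}os_{\widetilde{E}} - r \subseteq \widetilde{K} \subseteq \left(1+\tfrac{n}{m}\right)^{\frac{3m}{n}} \left(\widetilde{P}os_{\widetilde{E}} - r\right).
\]
Pulling $\widetilde{K}$ back through the linear isomorphism $f_Q \mapsto Q$ (which sends $r$ to $\mathbb{I}_n$ and preserves the facet count since it is a linear bijection between finite-dimensional affine spaces) yields the desired polytope $K$ sandwiched between $\widetilde{P}os_E - \mathbb{I}_n$ and $(1 + n/m)^{3m/n}(\widetilde{P}os_E - \mathbb{I}_n)$, with the same $O(n^{m-2})$ bound on the number of facets.

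There is essentially no obstacle here: the only content is the translation dictionary between $P_{n,2}$ and symmetric matrices, and the single integral identity $\int_{S^{n-1}} x_i x_j\, \sigma(x) = \delta_{ij}/n$. Everything nontrivial — in particular the spectral sparsification machinery underlying Theorem \ref{intro2} — has already been done in the general case.
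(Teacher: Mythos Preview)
Your proposal is correct and matches the paper's approach exactly: the paper gives no separate proof of this corollary, simply stating it as the specialization of Theorem~\ref{intro2} to $d=1$ under the standard identification of quadratic forms with symmetric matrices. You have written out the dictionary (in particular the identity $\int_{S^{n-1}} x_i x_j \,\sigma(x) = \delta_{ij}/n$ yielding $\int f_Q = Tr(Q)/n$) that the paper leaves implicit.
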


Even though we stated Theorem \ref{intro2} as there exists a polytope $K$  satisfying the desired approximation, we actually prove existence of a polytope $K$ where all the facets are defined by single pointwise evaluations. In this respect, our construction relates to a basic question: Given an $\varepsilon > 0$ and $f \in E$ with $\Int_{S^{n-1}} f(x) \; \sigma(x) > 0$, how many pointwise evaluations are needed to certify that $  f(x) \geq - \varepsilon ( \Int_{S^{n-1}} f(x) \; \sigma(x) )$  for all $x \in S^{n-1}$ ?  

\noindent Theorem \ref{intro2} gives an estimate for $\varepsilon=(1+ \frac{n}{m})^{\frac{3m}{n}}-1$.

The proof of Theorem \ref{intro2} exploits convex geometric properties of $Pos_E$ through spectral sparsification. In particular, we use results of Friedland and Youssef \cite{youssef} which is based on the recent solution of Kadison-Singer problem. The solution of Kadison-Singer problem and results based on it (including Theorem \ref{intro2}) are not constructive. In that respect, we also study random construction of a polyhedral approximation. To state our random approximation result we need a little more terminology. For all $p \geq 1$, we define $p$-norm of a homogenous polynomial $f$ as follows.

$$ \norm{f}_p := ( \Int_{S^{n-1}}  \abs{f(v)}^{p}  \; \sigma(v) )^{\frac{1}{p}}$$

\noindent where $\sigma$ is the uniform measure on $S^{n-1}$.  We also define the following quantity for subspaces $E$ of $P_{n,2d}$.

$$ M(E) : = \max_{f \in E} \frac{\norm{f}_2^2}{\norm{f}_1^2} $$

Now we are ready to state the random polyhedral approximation theorem.

\begin{thm} \label{intro3}

Let $E \subset P_{n,2d}$ be a subspace with $r \in E$ and $\dim(E)=m$. Let $\mu$ be the measure defined in the last section of this paper. For a given $\alpha$ with $0 < \alpha < 1$, we set $t=O \left( m^2 (\frac{M(E)}{1-\alpha})^m \ln(\frac{M(E)}{1-\alpha}) \right)$. Let $x_1,x_2,\ldots,x_t$ be independent random vectors in $E$ distributed according to $\mu$. We define a polytope $K_{\alpha}$ as follows.

$$ K_{\alpha}= \{ y \in E : \langle y , r \rangle =0 \; \text{and} \; \langle y , x_i \rangle \geq -1 \; \text{for all} \; 1 \leq i \leq t \} $$

\noindent Then $K_{\alpha}$ satisfies the following inclusions with probability at least $1-4(\frac{1-\alpha}{M(E)})^{m^2}$.
$$ \widetilde{P}os_E - r \subseteq K_{\alpha} \subseteq \frac{1}{\alpha}(\widetilde{P}os_E - r) $$
\end{thm}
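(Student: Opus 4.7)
\medskip

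\noindent \textbf{Proof proposal.} Equip $E$ with the $L^{2}(S^{n-1},\sigma)$ inner product $\langle f,g\rangle = \int_{S^{n-1}} fg\,d\sigma$. Because $r\equiv 1$ on $S^{n-1}$, one has $\langle f,r\rangle = \int_{S^{n-1}} f\,d\sigma$ and $\|r\|_{2}=1$, so $\widetilde{P}os_{E}-r$ lives in the hyperplane $E_{0}:=\{g\in E:\langle g,r\rangle =0\}$ of dimension $m-1$, and $g\in E_{0}$ belongs to $\widetilde{P}os_{E}-r$ iff $g(v)\geq -1$ for every $v\in S^{n-1}$. The easy inclusion $\widetilde{P}os_{E}-r\subseteq K_{\alpha}$ is then almost sure: the measure $\mu$ (constructed in the last section) is supported on functionals $x$ for which $\langle g,x\rangle\geq -1$ is a \emph{weakening} of pointwise nonnegativity $g\geq -1$ on $S^{n-1}$ --- a suitably normalized reproducing kernel $K_{v}$ is the natural example --- so every $g$ satisfying the pointwise bound automatically meets all the random constraints.

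For the reverse inclusion $K_{\alpha}\subseteq \alpha^{-1}(\widetilde{P}os_{E}-r)$ I would pass to the gauge functionals of the two bodies in $E_{0}$. The gauge of $\widetilde{P}os_{E}-r$ at $z\in E_{0}$ is $(-\inf_{v}z(v))^{+}$ and the gauge of $K_{\alpha}$ is $\max_{i}(-\langle z,x_{i}\rangle)^{+}$. Thus the inclusion is equivalent to the uniform statement: with probability at least $1-4((1-\alpha)/M(E))^{m^{2}}$, every $w\in E_{0}$ with $\sup_{v}w(v)=1$ satisfies $\max_{i}\langle w,x_{i}\rangle \geq \alpha$. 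In other words, the random sample must witness the supremum of every direction in $E_{0}$ up to a factor of $\alpha$.

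The plan for this uniform estimate is a covering-plus-concentration argument on the compact slice $\mathcal{S}:=\{w\in E_{0}:\sup_{v}w(v)=1\}$. For a fixed $w\in\mathcal{S}$, a small-ball / Paley--Zygmund bound on $\{x:\langle w,x\rangle\geq \alpha\}$ under $\mu$, using the centering $\langle w,r\rangle=0$, the moment ratio $\|w\|_{2}^{2}/\|w\|_{1}^{2}\leq M(E)$, and the finite-dimensionality of $E$, should yield $\Pr_{x\sim\mu}[\langle w,x\rangle\geq \alpha]\gtrsim ((1-\alpha)/M(E))^{m}$; hence $\Pr[\max_{i}\langle w,x_{i}\rangle<\alpha]\leq \exp\!\bigl(-ct((1-\alpha)/M(E))^{m}\bigr)$. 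I would then cover $\mathcal{S}$ in the sup-norm by an $\varepsilon$-net of cardinality at most $(CM(E)/(1-\alpha))^{O(m)}$, apply a union bound, and transfer to all of $\mathcal{S}$ via the $1$-Lipschitz dependence of both suprema on $w$ in the sup-norm. Plugging in $t=O\!\bigl(m^{2}(M(E)/(1-\alpha))^{m}\ln(M(E)/(1-\alpha))\bigr)$ absorbs the net cardinality and leaves the claimed failure probability of order $((1-\alpha)/M(E))^{m^{2}}$.

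The hard part is the per-point small-ball estimate, that is, lower-bounding $\Pr_{x\sim\mu}[\langle w,x\rangle \geq \alpha]$ uniformly over $w\in\mathcal{S}$. Because $w$ can be very concentrated near its supremum (imagine $w$ proportional to a highly peaked element like $v_{1}^{2d}-\int v_{1}^{2d}\,d\sigma$), one cannot simply invoke Chebyshev at the second-moment level; the moment ratio $M(E)$ and the dimension $m$ must be combined with the specific structure of $\mu$ from the last section (likely a leverage/Christoffel-weighted sampling of points of $S^{n-1}$) in order to produce the $((1-\alpha)/M(E))^{m}$ density bound. Matching the accompanying covering resolution so that the sup-norm Lipschitz transfer does not consume the factor $\alpha$ is the other delicate calibration, and fixes the precise scaling between the net cardinality and the per-point deviation that ultimately yields the prescribed $t$.
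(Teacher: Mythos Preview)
Your overall skeleton---pass to the polar body $B(E)$, show the random sample hits every supporting halfspace of $\alpha B(E)$, and dualize back---matches the paper. But two points diverge substantively.

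First, the paper does \emph{not} use a metric $\varepsilon$-net on your slice $\mathcal{S}$ followed by a Lipschitz transfer. It invokes the VC-dimension $\varepsilon$-net theorem of Koml\'os--Pach--Woeginger for the range space of halfspaces in $U(E)\cong\mathbb{R}^{m-1}$ (VC-dimension $\leq m$). Once one has a uniform lower bound $\mu(H_{+})\geq\varepsilon$ for every $H_{+}$ supporting $\alpha B(E)$, a random $\mu$-sample of size $\lceil \tfrac{3m}{\varepsilon}\ln\tfrac{1}{\varepsilon}\rceil$ is a transversal with probability $\geq 1-4(9e^{2}\varepsilon)^{m}$, with no sup-norm calibration needed. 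Your covering route can be made to work with the same sample size, but it is strictly more laborious and the ``delicate calibration'' you flag is exactly what the VC theorem sidesteps.

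Second, and more seriously, your guess about $\mu$ is wrong, and the correct construction is what makes the per-halfspace lower bound come out. The measure is \emph{not} a Christoffel/leverage reweighting of single evaluation points. It is the pushforward of the uniform measure on $\Delta^{m}\times(S^{n-1})^{m}$ under $(a_{1},\ldots,a_{m},v_{1},\ldots,v_{m})\mapsto\sum_{i=1}^{m}a_{i}\,\phi_{E}(v_{i})$; in particular each $x_{i}$ is a \emph{random convex combination of $m$ independent reproducing kernels}. The small-ball estimate then factors: Paley--Zygmund is applied to $w$ on $S^{n-1}$ under $\sigma$ (not to $\langle w,x\rangle$ under $\mu$) to get $\sigma(\{v:w(v)>0\})\geq \|w\|_{1}^{2}/(4\|w\|_{2}^{2})\geq 1/(4M(E))$. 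Then one observes geometrically that if all $m$ points $v_{1},\ldots,v_{m}$ fall in $\{w>0\}$ and the simplex weight lies in $(1-\alpha)\Delta^{m}$, the resulting convex combination lands in $H_{+}$; this product structure is precisely what produces the exponent $m$ and yields $\mu(H_{+})\geq((1-\alpha)/(4M(E)))^{m}$. A direct Paley--Zygmund on $\langle w,x\rangle$ under $\mu$ would not deliver that power of $m$, so the ``hard part'' you identify is resolved not by sharper concentration but by the design of $\mu$ itself.
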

 
\noindent Bounding $M(E)$ from above seems to require more information than just the dimension of the subspace. However, it easy to prove $M(E) \leq 2^{2d}$ for any subspace $E$ of $P_{n,2d}$ (see corollary \ref{yep}). 

As a special case of Theorem \ref{intro3}, consider $2d=4$ and $\alpha=\frac{n-1}{n}$. Then, $K_{\frac{n-1}{n}}$ has  $O(m^2 (16n)^{m}\ln(n))$ many facets and it satisfies 

$$ \widetilde{P}os_E - r \subseteq K_{\frac{n-1}{n}} \subseteq \frac{n}{n-1}(\widetilde{P}os_E - r)$$

\noindent with probability greater than $1-4 n^{-m^2}$.

The rest of the paper is structured as follows: In the second section, we review the background material coming from geometric functional analysis. In the third section, we discuss convex geometric properties of the cone of nonnegative polynomials. In the fourth section, we prove Theorem \ref{intro1} using Gaussian concentration inequalities and convex geometric duality introduced in the third section. In the fifth section, we prove Theorem \ref{intro2} using the tools introduced in the second and the third sections. Finally, we prove Theorem \ref{intro3} in the last section using a tool coming from computational geometry, namely the epsilon-net theorem.   

\section{John's Theorem and Spectral Sparsification}

We begin with recalling a fundamental theorem in convex geometry due to Fritz John \cite{john}.

\begin{thm}(John's Theorem) \label{john}
Every convex body $K \subset \mathbb{R}^n$ is contained in a unique ellipsoid of the minimal volume $E_{min}$.  Moreover, 

$$ \frac{1}{n} E_{min}  \subset K \subset E_{min} .$$

\noindent The minimal volume ellipsoid $E_{min}$ is the Euclidean unit ball $B_2^{n}$ if and only if the following conditions are satisfied:  $K \subset B_2^n$, there are unit vectors $(u_i)_{i=1}^m$ on the boundary of $K$ and positive real numbers $c_i$ such that

$$ \sum_{i=1}^m c_i u_i=0$$

\noindent and for all $x \in \mathbb{R}^n$ we have
$$ \sum_i c_i \langle u_i , x \rangle^2 = \norm{x}_2^2$$

\end{thm}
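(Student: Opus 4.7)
The plan is to prove the three assertions in the order (A) existence and uniqueness of $E_{\min}$, (B) the contact-point characterization for $E_{\min}=B_2^n$, (C) the sandwich inclusion, since the decomposition produced in (B) is precisely what drives (C). For (A) I would parametrize ellipsoids containing $K$ as $E(U,b)=\{x:\|Ux-b\|_2\leq 1\}$ with $U$ positive definite, so that $\vol{E(U,b)}=\omega_n/\det U$ and the containment $K\subset E(U,b)$ becomes $\sup_{k\in K}\|Uk-b\|_2^2\leq 1$, a convex constraint in $(U,b)$. We are then minimizing the strictly convex function $-\log\det U$ over a convex feasible set: existence comes from a compactness argument (nonempty interior of $K$ bounds the spectrum of $U$ away from $0$ along a minimizing sequence with bounded volume), and strict convexity of the objective on a convex set gives uniqueness.

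For (B), I would reduce affinely to $E_{\min}=B_2^n$, so that the optimum is $(U,b)=(I,0)$ and the active constraints correspond to contact points $u\in K\cap S^{n-1}$. A Carath\'eodory reduction in the ambient parameter space keeps only finitely many $u_i$ with positive multipliers. Writing the finite-dimensional Lagrangian
\[
\mathcal{L}(U,b,\lambda)=-\log\det U+\sum_i\lambda_i\bigl(\|Uu_i-b\|_2^2-1\bigr),
\]
stationarity in $U$ at $I$ gives $I=\sum_i c_i u_i u_i^T$ with $c_i=2\lambda_i$, and stationarity in $b$ at $0$ gives $\sum_i c_i u_i=0$; the trace of the first identity then yields $\sum_i c_i=n$. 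For the reverse direction one shows directly that any ellipsoid $\{x:\|Vx\|_2\leq 1\}$ containing every $u_i$ satisfies $\det V\leq 1$, by rewriting $\log\det(V^T V)$ through the identity $I=\sum_i c_i u_i u_i^T$ and invoking concavity of $\log\det$ on the positive semidefinite cone (equivalently, AM--GM on eigenvalues) together with $\sum_i c_i=n$.

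For (C), the upper inclusion $K\subset E_{\min}$ is immediate. For the lower inclusion I again reduce to $E_{\min}=B_2^n$: the relation $\sum_i c_i u_i=0$ with $u_i\in K$ places $0\in K$, and for any unit $v$ the scalars $s_i=\langle u_i,v\rangle$ satisfy $\sum_i c_i s_i=0$, $\sum_i c_i s_i^2=1$, $\sum_i c_i=n$, and $|s_i|\leq 1$. Splitting $\sum_i c_i s_i^2$ over the sign of $s_i$ and using $\sum_{s_i>0}c_i s_i=-\sum_{s_i<0}c_i s_i$ yields a lower bound of order $1/n$ on $\max_i |s_i|$, so that $\max_i\langle u_i,\pm v\rangle\geq 1/n$ and hence the support function satisfies $h_K(\pm v)\geq 1/n$, giving $\tfrac{1}{n}B_2^n\subset K$. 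The main technical care is needed in the Lagrangian step of (B): the optimization has a continuum of inequality constraints indexed by $K$, and the cleanest remedy is to first replace $K$ by the convex hull of its contact points (a polytope with the same minimal circumscribed ellipsoid) before invoking classical finite-dimensional KKT.
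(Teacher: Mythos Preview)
The paper does not prove this theorem at all: it is stated as background material, attributed to Fritz John, and simply cited to \cite{john} with no argument given. So there is no proof in the paper to compare your proposal against.

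That said, your outline is the standard modern route to John's theorem and is essentially sound as a plan. One small sharpening in part (C): you write that the sign-splitting argument gives ``a lower bound of order $1/n$'' on $\max_i s_i$, but you need the exact constant $1/n$, not just $\Omega(1/n)$. The tidiest way to extract it from your data ($\sum_i c_i s_i=0$, $\sum_i c_i s_i^2=1$, $\sum_i c_i=n$, $|s_i|\le 1$) is to set $M=\max_i s_i$, $m=\min_i s_i$, note $(s_i-M)(s_i-m)\le 0$ for every $i$, expand and sum with weights $c_i$ to obtain $-Mm\ge 1/n$, and then use $|m|\le 1$ to conclude $M\ge 1/n$. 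Your sign-splitting sketch can be pushed through to the same conclusion, but this variant is more direct and immediately gives the sharp constant the statement requires.
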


If the minimal volume ellipsoid of a convex body $K$ is the unit ball, we say $K$ is in John's position. Henceforth, we call the conditions in Theorem \ref{john} characterizing the John's position as John's decomposition. One way to view John's decomposition is to observe that the family of unit vectors $\{ u_i \}_{i=1}^{m}$ work like an  orthogonal basis in $\mathbb{R}^{n}$. This phenomenon is studied in depth by frame theory. The lemma below can be found virtually in any frame theory textbook.

\begin{lem} \label{frame}
We denote the map that sends $x$ to $\langle  x , y \rangle z$ by $ y \otimes z$. Then the following are equivalent

\begin{enumerate}
\item   
$$ \mathbb{I}=\sum_{i} c_i u_i \otimes u_i $$
\item  For every $x \in \mathbb{R}^n$
$$ x=\sum_{i} c_i \langle x,u_i \rangle u_i $$
\item For every $x \in \mathbb{R}^n$ 
$$ \sum_i c_i \langle u_i , x \rangle^2 = \norm{x}_2^2$$

\end{enumerate}
\end{lem}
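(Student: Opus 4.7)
The plan is to prove the three equivalences by a short cyclic argument \text{(1)} $\Leftrightarrow$ \text{(2)} $\Rightarrow$ \text{(3)} $\Rightarrow$ \text{(1)}, since each step is essentially mechanical once the definitions are unpacked.

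First I would establish \text{(1)} $\Leftrightarrow$ \text{(2)} by simply evaluating the operator equation on an arbitrary vector. By the definition of the rank-one map, $(u_i \otimes u_i)(x) = \langle x, u_i\rangle u_i$, so the identity $\mathbb{I} = \sum_i c_i\, u_i \otimes u_i$ applied to $x$ reads $x = \sum_i c_i \langle x, u_i\rangle u_i$, which is exactly \text{(2)}. Conversely, if \text{(2)} holds for every $x$, then $\sum_i c_i\, u_i \otimes u_i$ agrees with $\mathbb{I}$ on every vector and hence equals it as an operator.

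Next I would get \text{(2)} $\Rightarrow$ \text{(3)} by taking the inner product of both sides of the expansion in \text{(2)} with $x$ itself: $\|x\|_2^2 = \langle x, x\rangle = \sum_i c_i \langle x, u_i\rangle \langle u_i, x\rangle = \sum_i c_i \langle u_i, x\rangle^2$. For the closing implication \text{(3)} $\Rightarrow$ \text{(1)}, I would note that the operator $T := \sum_i c_i\, u_i \otimes u_i$ is symmetric, and condition \text{(3)} reads $\langle Tx, x\rangle = \|x\|_2^2 = \langle \mathbb{I} x, x\rangle$ for every $x$. A symmetric operator is determined by its quadratic form (via polarization, $\langle Ax, y\rangle = \tfrac{1}{2}(\langle A(x+y), x+y\rangle - \langle Ax, x\rangle - \langle Ay, y\rangle)$), so $T = \mathbb{I}$.

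There is no real obstacle here; the only thing to be slightly careful about is invoking symmetry before passing from the quadratic form identity back to the operator identity, since two distinct operators can in principle induce the same quadratic form if one does not restrict to symmetric ones, but each $u_i \otimes u_i$ is self-adjoint so the sum $T$ is as well.
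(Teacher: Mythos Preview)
Your argument is correct and complete; the cyclic scheme $(1)\Leftrightarrow(2)\Rightarrow(3)\Rightarrow(1)$ works exactly as you describe, and you are right to flag symmetry of $T$ before invoking polarization. The paper itself does not supply a proof of this lemma at all---it simply remarks that the result ``can be found virtually in any frame theory textbook'' and moves on---so there is no competing approach to compare against; your write-up would serve perfectly well as the omitted proof.
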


Another perspective on John's decomposition is to view the decomposition as a discrete measure supported on the vectors $u_i$ with weights $c_i$, and the identity being the covariance matrix of the measure. This measure theoretic interpretation is formalized in the notion of isotropic measures which we present below. 

\begin{dfn}
A finite Borel measure $Z$ on the sphere $S^{n-1}$ of a $n$ dimensional real vector space $V$ is  said to be isotropic if 

$$   \norm{x}_2^2 = \int_{S^{n-1}}  \langle x ,u \rangle^2 dZ(u)  $$

\noindent for all $x  \in V$. Moreover, we define the centroid of a measure $Z$ supported on the sphere $S^{n-1}$ as 

$$ \frac{1}{Z(S^{n-1})} \Int_{S^{n-1}} u \; Z(u)  .$$

\noindent We say the measure is centered at $0$ if the centroid is the origin.
\end{dfn}

An isotropic measure supported on the sphere with centroid $0$ is the continuous analog of John's decomposition. It is known that a convex body is in John's position if and only if the touching points of the convex body to the unit ball is the support of an isotropic measure with centroid $0$ \cite{petros}. The advantage of this continuous point of view is that interesting convex bodies such as convex hull and Minkowski sum of compact group orbits, or the dual of the cone of nonnegative polynomials are easily shown to support an isotropic measure with their (possibly) infinitely many touching points to the unit ball.

In general, convex bodies with fewer than $\frac{n(n+3)}{2}$ touching points to their minimal volume ellipsoid form a dense family in the space of convex bodies \cite{gruber}. The $O(n^2)$ many touching points in this dense family brings the problem of sparsification in John's decomposition. The main goal of this approach is to find $O(n)$ many vectors among the initial decomposition which form an approximate decomposition of identity. This line of reasoning is closely related to the recent solution of Kadison-Singer problem; we begin our discussion with a remarkable theorem of Batson, Spielman and Srivastava which was  a precursor to the solution of Kadison-Singer problem ( see Theorem 1.6 of \cite{siri1}).

\begin{thm} \label{ss}
Fix $\varepsilon \in (0,1)$ and $n,m \in \mathbb{N}$. For every $x_1,x_2,\ldots,x_m \in \mathbb{R}^n$ there exist $s_!,s_2,\ldots,s_m \in [0,\infty)$ such that

$$ \# \{ s_i :  s_i \neq 0 \} \leq \left\lceil  \frac{n}{\varepsilon^2} \right\rceil $$

\noindent and for all $y \in \mathbb{R}^n$ we have

$$ (1-\varepsilon)^2 \sum_{i=1}^{m} \langle x_i , y \rangle^2 \leq \sum_{i=1}^m  s_i \langle x_i , y \rangle^2 \leq (1+\varepsilon)^2 \sum_{i=1}^{m} \langle x_i , y \rangle^2 $$  

\noindent In particular,  for the case $\sum_{i=1}^m x_i \otimes x_i= \mathbb{I}_n$ we have

$$ (1-\varepsilon)^2  \mathbb{I}  \preceq \sum_{i=1}^{m} s_i x_i \otimes x_i  \preceq (1+\varepsilon)^2 \mathbb{I}  $$
\end{thm}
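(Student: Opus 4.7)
The plan is to prove this theorem via the barrier-function method of Batson, Spielman and Srivastava. First I would reduce to the isotropic case: letting $A = \sum_{i=1}^{m} x_{i} \otimes x_{i}$ and replacing each $x_{i}$ by $y_{i} = A^{-1/2} x_{i}$ (working on the range of $A$ when $A$ is singular), the statement becomes: given vectors $y_{1},\ldots,y_{m} \in \mathbb{R}^{n}$ with $\sum_{i} y_{i} \otimes y_{i} = I_{n}$, produce weights $s_{i} \geq 0$ with at most $\lceil n/\varepsilon^{2} \rceil$ nonzero entries satisfying
$$(1-\varepsilon)^{2}\, I_{n} \preceq \sum_{i=1}^{m} s_{i}\, y_{i} \otimes y_{i} \preceq (1+\varepsilon)^{2}\, I_{n}.$$
Reversing the change of basis then recovers the general quadratic-form statement, and the ``In particular'' consequence is immediate because in that case no reduction is needed.

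The second step is to introduce the \emph{upper} and \emph{lower} barrier potentials
$$\Phi^{u}(M) \;=\; \mathrm{Tr}\!\bigl((uI - M)^{-1}\bigr), \qquad \Phi^{\ell}(M) \;=\; \mathrm{Tr}\!\bigl((M - \ell I)^{-1}\bigr),$$
defined whenever $\ell < \lambda_{\min}(M) \leq \lambda_{\max}(M) < u$. Since these potentials blow up as eigenvalues approach the barriers, keeping them bounded keeps the spectrum strictly inside $(\ell, u)$. I would initialize $A_{0} = 0$ together with shifts $u_{0} > 0$ and $\ell_{0} < 0$ tuned so that $\Phi^{u_{0}}(0)$ and $\Phi^{\ell_{0}}(0)$ start at values of order $\varepsilon$, and so that after $k = \lceil n/\varepsilon^{2}\rceil$ iterations the ratio of the two barriers equals exactly $(1+\varepsilon)^{2}/(1-\varepsilon)^{2}$.

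The heart of the proof is the iterative step: given $A_{k}$ with spectrum inside $(\ell_{k}, u_{k})$, I would show that one can select an index $i$ and weight $t > 0$ so that
$$A_{k+1} = A_{k} + t\, y_{i} \otimes y_{i}, \qquad u_{k+1} = u_{k} + \delta_{U}, \qquad \ell_{k+1} = \ell_{k} + \delta_{L}$$
with fixed per-step increments $\delta_{U}, \delta_{L} > 0$, in such a way that \emph{both} potentials are non-increasing. Using the Sherman--Morrison identity, the change in each potential can be written explicitly as a rational function of $t$ in terms of the quadratic forms $y_{i}^{T}(u_{k} I - A_{k})^{-p} y_{i}$ and $y_{i}^{T}(A_{k} - \ell_{k} I)^{-p} y_{i}$ for $p \in \{1,2\}$. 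The existence of an acceptable pair $(i, t)$ is then proved by an averaging argument: summing the quadratic forms over $i$ and invoking $\sum_{i} y_{i} \otimes y_{i} = I_{n}$ turns them into traces involving powers of $(u_{k}I - A_{k})$ and $(A_{k} - \ell_{k}I)$, which are controlled by the barrier potentials themselves.

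The main obstacle is the simultaneous calibration of $\delta_{U}$, $\delta_{L}$, $u_{0}$, and $\ell_{0}$ so that the constraints on $t$ imposed by the upper and lower potentials are \emph{jointly} feasible at every step while still advancing the barriers by a definite amount. Once this calibration is carried out, after $\lceil n/\varepsilon^{2} \rceil$ iterations the matrix $A_{k}$ has its spectrum inside an interval whose endpoints are in ratio $(1+\varepsilon)^{2}/(1-\varepsilon)^{2}$; rescaling by a positive constant (absorbed into the $s_{i}$) places this interval at $[(1-\varepsilon)^{2},(1+\varepsilon)^{2}]$, yielding the inner ``In particular'' inequality. Undoing the initial change of basis via $y_{i} = A^{-1/2} x_{i}$ then converts the spectral bound on $\sum_{i} s_{i} y_{i} \otimes y_{i}$ into the claimed two-sided inequality on $\sum_{i} s_{i}\langle x_{i}, y\rangle^{2}$ for all $y \in \mathbb{R}^{n}$, completing the proof.
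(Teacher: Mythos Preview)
The paper does not supply its own proof of this theorem: it is quoted as a known result of Batson, Spielman and Srivastava and attributed to Theorem~1.6 of \cite{siri1}. Your proposal is a faithful sketch of the original barrier-potential argument from that reference, so there is no discrepancy to flag---you are essentially reproducing the proof the paper points to rather than one it gives.
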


\noindent Theorem \ref{ss} was recently refined by an article of Friedland and Youssef \cite{youssef}. Friedland and Youssef's work uses the solution of Kadison-Singer problem as an intermediate step and then provides refined estimates on a suite of problems including spectral sparsification, restricted invertibility and isomorphic Dvoretzky problem. The following result ( Theorem 4.1 of \cite{youssef}) will be used in the fifth section. 

\begin{thm} \label{pierre}
There exists a universal constant $c$ such that the following holds. Let $\varepsilon >0$ and $\{ c_i , x_i \}_{i=1}^m$ be a John's decomposition of identity in $\mathbb{R}^n$ (i.e., $x_i$ are unit vectors and $c_i > 0$). Then there exists a multiset  $\sigma$ of indices from $[m]$ of size at most $\frac{n}{c\varepsilon^2}$ so that

$$  (1-\varepsilon) \mathbb{I}  \preceq \frac{n}{\abs{\sigma}} \sum_{i \in \sigma} (x_i-u)  \otimes (x_i-u) \preceq (1+\varepsilon) \mathbb{I}  $$

\noindent where $u=\frac{1}{\abs{\sigma}} \sum_{i \in \sigma} x_i$ satisfies $\norm{u} \leq \frac{2\varepsilon}{3\sqrt{n}}$. 
\end{thm}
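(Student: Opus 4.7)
The plan is to lift the John decomposition into $\mathbb{R}^{n+1}$ so that a single application of an unweighted (multiset) spectral sparsifier of the type delivered by the Kadison--Singer solution simultaneously forces the spectral bound on the centered empirical covariance and the bound on the centroid $u$. Operator-norm closeness to the identity in the lifted matrix will extract both pieces of information through its block structure.

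For the lift, first take the trace of $\sum_i c_i\,x_i\otimes x_i=\mathbb{I}_n$ to obtain $\sum_i c_i=n$, and define
$$\tilde x_i=\sqrt{c_i}\begin{pmatrix} x_i \\ 1/\sqrt n\end{pmatrix}\in\mathbb{R}^{n+1}.$$
A straightforward block computation using the centering $\sum_i c_i x_i=0$ and the trace identity $\sum_i c_i=n$ gives $\sum_i\tilde x_i\otimes \tilde x_i=\mathbb{I}_{n+1}$, so the $\tilde x_i$ form an isotropic system in $\mathbb{R}^{n+1}$. I would then invoke the unweighted spectral sparsifier of Marcus--Spielman--Srivastava type (the multiset refinement behind Theorem \ref{ss}) to extract a multiset $\sigma\subseteq[m]$ of size at most $O(n/\varepsilon^2)$ such that $\frac{n+1}{\abs{\sigma}}\sum_{i\in\sigma}\tilde x_i\otimes \tilde x_i$ lies within operator-norm distance $\varepsilon$ of $\mathbb{I}_{n+1}$. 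The weights $c_i$ themselves are absorbed by the standard reduction of replacing each index $i$ with a multiplicity proportional to $c_i$ (approximating irrational $c_i$ by nearby rationals if necessary) before applying the sparsifier.

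Reading off the blocks of this $(n+1)\times(n+1)$ approximation then delivers both conclusions. The $n\times n$ block gives $\frac{n}{\abs{\sigma}}\sum_{i\in\sigma}x_i\otimes x_i$ within $\varepsilon$ of $\mathbb{I}_n$, and the algebraic identity
$$\sum_{i\in\sigma}(x_i-u)\otimes(x_i-u)=\sum_{i\in\sigma}x_i\otimes x_i-\abs{\sigma}\,u\otimes u$$
converts this to the desired sandwich for the centered covariance once the rank-one correction $u\otimes u$ is absorbed into the error. The mixed $(n+1)$-st row/column block reads $(1/\sqrt n)\cdot(n/\abs{\sigma})\sum_{i\in\sigma}x_i=\sqrt n\,u$, whose Euclidean norm is at most $\varepsilon$, yielding $\norm{u}\leq \varepsilon/\sqrt n$; sharpening the lift parameter (replacing the entry $1/\sqrt n$ by a tuned constant) and tracking the off-diagonal block more carefully recovers the explicit factor $2/3$ in $\norm{u}\leq 2\varepsilon/(3\sqrt n)$.

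The main obstacle is producing an \emph{unweighted} multiset sparsifier of linear-in-$n$ size, rather than the weighted form stated in Theorem \ref{ss}: this is precisely the strength provided by the Kadison--Singer solution of Marcus--Spielman--Srivastava that Friedland and Youssef leverage. Once that ingredient is black-boxed, the lift and the block extraction are routine; the remaining nuisances (converting arbitrary positive $c_i$ into integer multiplicities and tightening the constant on the centroid bound) are elementary but require care.
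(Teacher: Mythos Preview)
The paper does not prove Theorem~\ref{pierre}; it is quoted as Theorem~4.1 of Friedland--Youssef~\cite{youssef} and used as a black box in Section~5. There is no proof in the paper to compare your proposal against.

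For what it is worth, your lifting-to-$\mathbb{R}^{n+1}$ strategy is the standard device for handling the centroid and the covariance simultaneously, and it is plausibly along the lines of what Friedland--Youssef actually do. One imprecision in your sketch: with the lift $\tilde x_i=\sqrt{c_i}\,(x_i,\,1/\sqrt n)^{\top}$ as you wrote it, the top-left block of $\frac{n+1}{\abs{\sigma}}\sum_{i\in\sigma}\tilde x_i\otimes\tilde x_i$ is $\frac{n+1}{\abs{\sigma}}\sum_{i\in\sigma}c_i\,x_i\otimes x_i$, not $\frac{n}{\abs{\sigma}}\sum_{i\in\sigma}x_i\otimes x_i$; the weights $c_i$ are still present. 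The multiplicity reduction has to be carried out \emph{before} the lift (so that one is sparsifying a system of equally weighted vectors $(y_j,\,1/\sqrt n)^{\top}$ with each $y_j$ a unit vector from $\{x_1,\dots,x_m\}$), after which the sparsifier's output multiset translates directly into a multiset over $[m]$ with the unweighted normalization $n/\abs{\sigma}$. The remaining step---producing an unweighted multiset sparsifier of size $O(n/\varepsilon^2)$---is exactly the Kadison--Singer-powered ingredient you identify, and is where the substantive work in~\cite{youssef} lies.
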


\section{Convex Geometry of Nonnegative Polynomials} \label{cgpol}

In this section,  we would like to introduce modern convex geometry point of view on nonnegative polynomials. Our plan is to first introduce the concepts and results in the dense case, and then write the implications for the sparse case in a separate section. Most of the results in this section have appeared  in the literature \cite{barvinok3, blekherman} with possibly different proofs. 

We start with defining  an inner product on $P_{n,2d}$. For $f,g \in P_{n,2d}$, the inner product $\langle f , g \rangle$ is defined as follows.

$$  \langle f ,g \rangle = \Int_{S^{n-1}}   f(x) g(x) \; \sigma(x)   $$

\noindent where $\sigma$ is the uniform measure on the sphere $S^{n-1}$. It must be clear that the norm introduced by this inner product is the $2$-norm defined in the introduction. Throughout the paper this norm will be denoted by $\norm{.}_2$.

We consider the action of $SO(n)$ on $P_{n,2d}$. For $T \in SO(n)$ and $f \in P_{n,2d}$,  we denote the result of the action of $T$ on $f$ by $T \circ f$, and the action is defined by pointwise evalutions on $x \in S^{n-1}$ as follows.

$$ T \circ f (x) := f (T^{-1}x) $$ 

\noindent Since knowing all pointwise evaluations on the sphere uniquely defines the homogenous polynomial $T \circ f$, this action is well defined. For any $f,g \in P_{n,2d}$, we clearly have 

$$ \langle T \circ f , T \circ g \rangle = \langle f ,g \rangle .$$

\noindent Hence, the inner product is $SO(n)$ invariant. Now, we consider pointwise evaluation maps on the vector space $P_{n,2d}$. Let $v \in S^{n-1}$, and consider the following map.

$$ l_v : P_{n,2d} \rightarrow \mathbb{R}  \; \; , \; \; l_v(f)=f(v) $$

\noindent The operator norm of $l_v$ is defined as follows.

$$ \norm{l_v}=\max_{\norm{f}=1} \abs{l_v(f)}= \max_{\norm{f}=1} \abs{f(v)} $$ 

\noindent For any two arbitrary unit vectors $u,v \in S^{n-1}$, one can find $T \in SO(n)$ such that $T(u)=v$. Then for all $f \in P_{n,2d}$, one has $T \circ f(v)=f(u)$. Since, $\norm{.}_2$ is invariant under the $SO(n)$ action, we immediately have $\norm{l_v}=\norm{l_u}$. Hence, the operator norm $\norm{l_v}$ is fixed for all $v \in S^{n-1}$. 

We introduced a Hilbert space structure on $P_{n,2d}$, so we have the Riesz Representation Theorem. That is, for all $v \in S^{n-1}$ there exists a corresponding unique $p_v \in P_{n,2d}$ such that  for all $f \in P_{n,2d}$, we have 

$$ l_v(f) = f(v) = \langle f , p_v \rangle .$$

\noindent Since $\norm{l_v}=\norm{p_v}_2$, and since the norm of $l_v$ is fixed over the sphere, we conclude that $\norm{p_v}_2$ is fixed over the sphere as well.  

Results of this sections are basis independent; the polynomials $p_v$ only depend on the inner product. In the lemma below, we write a concrete expansion of $p_v$ for an arbitrary orthonormal basis and derive some basic properties.

\begin{lem} \label{zonal}
\noindent Let $u_1, u_2, \ldots, u_N \in P_{n,2d}$ be  an orthonormal basis for $P_{n,2d}$ where $N=\binom{n+d-1}{d}$. For every $v \in S^{n-1}$, we define the following polynomial $p_v$.

$$ p_v(x) := \sum_{i=1}^N u_i(v) u_i(x)  $$
 
\noindent Then, $p_v$ possess the following properties:

\begin{enumerate}
\item For all $q \in P_{n,2d}$, we have

$$ \langle q , p_v \rangle = q(v) .$$

\item  For $v, w \in S^{n-1}$  and $T \in SO(n) $ we have  the following equality.

$$  p_v(w)=p_w(v)= p_{T(v)}(T(w))  $$

\item  For all $v \in S^{n-1}$ the following holds. 

$$ N=p_v(v)=\norm{p_v}^2  $$

\item The following holds for all $q \in P_{n,2d}$.

$$ \frac{max_{v \in S^{n-1}} \abs{q(v)}   }{\norm{q}_2}  \leq \sqrt{N} $$

\end{enumerate}
\end{lem}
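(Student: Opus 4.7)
The plan is to prove the four items in sequence, using essentially only the definition of $p_v$, the orthonormality of the basis $\{u_i\}$, and the $SO(n)$-invariance of the inner product.

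For item (1), I would expand an arbitrary $q \in P_{n,2d}$ in the orthonormal basis as $q = \sum_i \langle q, u_i \rangle u_i$, then pair against $p_v = \sum_i u_i(v) u_i$. Orthonormality collapses the double sum to $\sum_i \langle q, u_i \rangle u_i(v)$, which is just $q(v)$. This is the Riesz representation made concrete.

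For item (2), the equality $p_v(w) = p_w(v)$ is immediate from the symmetric definition $\sum_i u_i(v) u_i(w)$. For the rotation identity, the cleanest route is to note that $p_v$ as characterized by item (1) is basis-independent. Given $T \in SO(n)$, the family $\{T \circ u_i\}$ is also an orthonormal basis because $\langle T \circ f, T \circ g \rangle = \langle f, g\rangle$; plugging this basis into the definition of $p_{T(v)}$ and using $(T \circ u_i)(T(w)) = u_i(w)$ gives $p_{T(v)}(T(w)) = \sum_i u_i(v) u_i(w) = p_v(w)$. Alternatively, define $q(x) := p_{T(v)}(T x)$ and verify via the change of variables $y = Tx$ (using $SO(n)$-invariance of $\sigma$) that $\langle f, q \rangle = f(v)$ for every $f$, so $q = p_v$ by uniqueness.

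For item (3), taking $q = p_v$ in item (1) gives $\norm{p_v}_2^2 = \langle p_v, p_v \rangle = p_v(v)$. Item (2) shows $p_v(v)$ is constant in $v$ over $S^{n-1}$, so it equals its spherical average; interchanging sum and integral,
\[
p_v(v) = \Int_{S^{n-1}} \sum_{i=1}^N u_i(w)^2\,\sigma(w) = \sum_{i=1}^N \norm{u_i}_2^2 = N.
\]
Item (4) is then Cauchy--Schwarz applied to item (1): $|q(v)| = |\langle q, p_v\rangle| \leq \norm{q}_2 \norm{p_v}_2 = \sqrt{N}\,\norm{q}_2$.

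The only step that requires any care is the rotation-invariance half of item (2); the rest is mechanical manipulation of the basis expansion. Everything else follows by chaining item (1) with orthonormality and $SO(n)$-invariance of the inner product, so I do not anticipate a genuine obstacle.
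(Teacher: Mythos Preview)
Your proposal is correct and follows essentially the same route as the paper: item (1) via the orthonormal expansion, item (3) by averaging the constant $p_v(v)$ over the sphere and using $\sum_i \norm{u_i}_2^2 = N$, and item (4) by Cauchy--Schwarz. For item (2) the paper proves $p_{T(w)} = T\circ p_w$ via the reproducing property and $SO(n)$-invariance of the inner product (your ``alternative'' argument), while your first suggestion via the rotated orthonormal basis $\{T\circ u_i\}$ is an equally valid and equally short variant.
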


\begin{proof}
Given  $q \in P_{n,2d}$ we have 

$$ q(v)= \sum_{i=1}^N \langle q ,u_i \rangle u_i(v)= \langle q , \sum_{i} u_i(v) u_i \rangle= \langle q ,p_v \rangle  $$

For any $T \in SO(n) $ and any $ f \in P_{n,2d}$, we have

$$  \langle f , p_{T(w)} \rangle = f(T(w)) = \langle T^{-1} \circ f , p_w  \rangle = \langle f , T \circ p_w \rangle  $$

\noindent Since $f$ is arbitrary, this proves  $p_{T(w)}= T \circ p_w $ and it completes the proof of second claim.  

Now we would like to show that $p_v(v)=N$ for all $v \in S^{n-1}$. Since $p_v(v)=p_{w}(w)$ for all $v,w \in S^{n-1}$, we have

$$  p_v(v)=\Int_{S^{n-1}} p_v(v) \; \sigma(v) =  \Int_{S^{n-1}}  \langle p_v , p_v \rangle \; \sigma(v) $$

\noindent Expanding the right most equation, we have

$$ p_v(v)= \Int_{S^{n-1}}  \langle \sum_{i} u_i(v) u_i ,  \sum_{i} u_i(v) u_i \rangle \; \sigma(v) = \sum_{i} \Int_{S^{n-1}} u_i(v)^2 \; \sigma(v)=N $$  

\noindent Last claim in the theorem statement is a direct application of the Cauchy-Schwartz inequality. 
\end{proof}

\noindent For any nonnegative polynomial $p \in Pos_{n,2d}$ we have

$$  \Int_{S^{n-1}} p(x) \; \sigma(x)   = \langle p , r \rangle \geq 0  $$

\noindent where $r=(x_1^2+x_2^2+\ldots + x_n^2)^{d}$. We observe that $\widetilde{P}os_{n,2d}= \{ p \in Pos_{n,2d} : \langle p ,r \rangle =1 \}$.   We denote by  $U$ the subspace of $P_{n,2d}$ consisting of polynomials orthogonal to $r$.

$$  U := \{ f \in P_{n,2d} : \langle f ,r \rangle = 0 \} $$

\noindent In other words, $U$ is the subspace of mean zero polynomials.

$$ U := \{  f \in P_{n,2d} :  \Int_{S^{n-1}}  f(x) \; \sigma(x)= 0  \} $$

\noindent Now we define a map from $S^{n-1}$ to $P_{n,2d}$ as follows.

$$ \phi:  S^{n-1} \rightarrow P_{n,2d} $$
$$  \phi(v)= p_v-r   $$

\noindent Observe that for all $v \in S^{n-1}$ we have $\norm{\phi(v)}_2=\sqrt{N-1}$. Moreover, we have   

$$  \langle \phi(v), r \rangle = \langle p_v -r ,r \rangle =  0 .$$

\noindent Hence $ \phi(S^{n-1}) \subset U $.  Now let $\sigma$ be the uniform measure on $S^{n-1}$ and let $\mu$ be the pushforward measure of $\sigma$ under $\phi$. For all $q \in U$ we have the following equality.

$$ \norm{q}^2= \Int_{S^{n-1}}  q(v)^2 \; \sigma(v) = \Int_{S^{n-1} } \langle q , p_v \rangle^2 \; \sigma(v) = \Int_{S^{n-1}}   \langle q , \phi(v) \rangle^2  \; \sigma(v)   $$

\noindent By definition of the pushforward measure we have

$$ \norm{q}^2=  \Int_{S^{n-1}}   \langle q , \phi(v)  \rangle^2 \; \sigma(v)  = \Int_{U} \langle q , u \rangle^2  \; \mu(u)  .$$

\noindent Therefore we observe that $\mu$ is an isotropic measure supported on the $\sqrt{N-1}$ scaled sphere of $U$. Hence $\mu$ creates a decomposition of identity! \\

\noindent In order to view the support of $\mu$ as a John's decomposition we also need to compute it's centroid. 

$$   q= \Int u \; \mu(u) = \Int_{S^{n-1}} (p_v -r) \; \sigma(v)   $$

\noindent  By construction, $q$ is invariant under the action of $SO(n)$. Therefore $q=a (x_1^2+x_2^2+\ldots + x_n^2)^{d}$ for some $a \in \mathbb{R}$. Since $q \in U$, we deduce that $a=0$. Hence the measure $\mu$ is centered at the origin. \\

\noindent Now we define the body of pointwise evalutaions;

$$ B := conv \left( Im (\phi) \right)  $$ 

\noindent  We observed that $\frac{1}{\sqrt{N-1}}B$ is convex hull of an isotropic measure supported on the sphere with centroid 0. It immediately follows from the discussion in the previous section that $\frac{1}{\sqrt{N-1}}B$ is in John's position. 

\noindent Now we consider the dual convex body $B^{\circ}$.

$$ B^{\circ} = \{ q \in U : \langle q , p \rangle \leq 1 \; \text{for all} \; p \in B  \} = \{  q \in U : \langle q , p_v-r \rangle  \leq 1  \; \text{for all} \; v \in S^{n-1} \} $$

\noindent By definition of $U$ and $p_v$,  we have $\langle q , p_v-r \rangle = \langle q , p_v \rangle=q(v)$ which shows the following equivalence.

$$ - B^{\circ} + r  = \widetilde{P}os_{n,2d}  $$

\noindent This nice convex geometric duality allows us to approximate $\widetilde{P}os_{n,2d}$ by approximating $B$.  

\begin{rem}
Readers who incline more toward algebraic geometry can think of $B$ as the convex hull of the $d$-th Veronese embedding. 
\end{rem}

\subsection{Structured Polynomials}

Let $E \subset P_{n,2d}$ be a linear subspace with $\dim(E)=m$ and $r=(x_1^2+x_2^2+\ldots + x_n^2)^d \in E$. Recall the definition of $Pos_E$.

$$ Pos_E := \{ f \in E : f(x) \geq 0 \; \text{for all} \; x \in S^{n-1} \}$$

\noindent We use the inner product induced by $P_{n,2d}$ on $E$. For all $f \in Pos_E$, we trivially have $\langle f , r \rangle \geq 0$. Now we recall the definition of the compact base $\widetilde{P}os_E$.

$$ \widetilde{P}os_E := \{ f \in Pos_E : \langle f ,r \rangle = 1 \} $$

\noindent Let $\Pi_E$ denote the orthogonal projection map on $E$. Using the notation introduced in the previous subsection, we have the following for all $f \in E$.

$$ f(v) = \langle f , p_v \rangle =  \langle f , \Pi_E(p_v) \rangle$$

\noindent As an example, since $r \in E$ we have $ \langle r , p_v \rangle = 1 = \langle r , \Pi_E(p_v) \rangle$.  Now we define a map $\phi_E$ as follows. 

$$ \phi_E:  S^{n-1} \rightarrow E $$
$$  \phi_E(v)= \Pi_E(p_v-r)=\Pi_E(p_v) -r   $$

\noindent First observation is that $ \langle \phi_E(v) , r \rangle = 0$. We define $U(E)$ to be the following subspace.

$$ U(E) := \{ f \in E : \langle f , r \rangle = 0 \} $$

\noindent  Hence, $\dim(U(E))= m-1$, and we have $\phi_E(S^{n-1}) \subset U(E)$. 

Second observation is that $\norm{\phi_E(v)}_2=(\norm{\Pi_E(p_v)}_2^2 - 1)^{\frac{1}{2}}$. This observation shows that $\norm{\phi_E(v)}_2$ can change at every point $v \in S^{n-1}$ in contrast to the situation in previous section. This change in the norm can occur because $E$ is not necessarily closed under the $SO(n)$ action, and certain directions on the sphere are preferred over others by the structure of the subspace $E$. 

We would like to continue with the isotropic measure observation of the previous section.  We define $\mu_E$ to be the pushforward measure of $\sigma$ (the uniform measure on $S^{n-1}$) to $E$ under the map $\phi_E$. Now, for all $q \in E$ we have

$$ \norm{q}^2=  \Int_{S^{n-1}}   \langle q , \phi_E(v)  \rangle^2 \; \sigma(v)  = \Int_{E} \langle q , u \rangle^2  \; \mu_E(u)  . $$

\noindent Therefore, $\mu_E$ is an isotropic measure. Proving that $\mu_E$ has centroid at $0$ is also easy. 

Even though we lost the control on the norms of $\norm{\phi_E(v)}_2$, $\mu_E$ being an isotropic measure has the following immediate consequence (which can be seen by taking the trace of the covariance matrix of $\mu_E$). 

$$ \Int_{S^{n-1}}  \norm{\phi_E(v)}_2 \; \sigma(v) = m-1 $$

\noindent Now, we define the body of pointwise evaluations in this setting as follows.

$$ B(E) := conv \{ \phi_E(v) :  v \in S^{n-1} \}  $$

\noindent For any $f \in E$ with $\langle f , r \rangle = 0$ and for all $v \in S^{n-1}$ we have the following relation.

$$ f(v) \geq -1  \Leftrightarrow \langle f , p_v \rangle \geq -1  \Leftrightarrow \langle f , \phi_E(v) \rangle \geq - 1 $$

\noindent Also note that $q\in \widetilde{P}os_E$ if and only if $\langle q-r , r \rangle = 0$ and $(q-r)(v) \geq -1$ for all $v \in S^{n-1}$. Hence,  we conclude

$$  \widetilde{P}os_E = - B(E)^{\circ} + r $$

\begin{rem} \label{rudelson}
One of the reasons that make isotropic measures appealing is a theorem of Rudelson. Let $x \in \mathbb{R}^m$ be an isotropic random vector. Let $x_1, x_2, \ldots, x_M$ be independent copies of $x$. Then, Rudelson's remarkably general theorem \cite{rudelson1} states the following.

 $$ \mathbb{E} \norm{  \frac{1}{M} \sum_{i=1}^M x_i \otimes x_i - \mathbb{I}  } \leq C \sqrt{ \frac{\log(m)}{M} } (\mathbb{E} \norm{x_i}_2^{\log M})^{\frac{1}{\log M}} $$

\noindent Hence, if one has any control on the $\max_{v \in S^{n-1}} \norm{\phi_E(v)}_2$, Rudelson's theorem provides a randomized way to obtain an approximate decomposition of identity, and that is all needed for the construction of a polyhedral approximation to $B(E)$.
\end{rem}

\begin{rem}
It turns out that the following can be proved without too much effort. For a fixed $m > 8n$, let $E$ be a random $m$ dimensional linear subspace of $P_{n,2d}$ drawn from the Haar measure on $Gr(\binom{n+d-1}{d},m)$. Then, the following hold for all $v \in S^{n-1}$

$$ \abs{ \norm{\phi_E(v)}_2 - \sqrt{m} } \leq \frac{\sqrt{m}}{2} $$

\noindent with probability greater than $1-\exp(-\frac{m}{8c_1n})$ where $c_1$ is an absolute constant.

This shows that subspaces of dimension $\Omega(n)$ typically have well controlled behavior in terms of the change in the norms of $\norm{\phi_E(v)}_2$. In this note, we are interested in the case where $m$ is a small fixed number independent of $n$. So, we skip the proof of this claim and leave it to reader who enjoys working with random projections. 
\end{rem}

\section{Limits of Approximation with Few Facets}

We have established the convex geometric duality between $ \widetilde{P}os_{n,2d}$ (the section of the cone of nonnegative polynomials) and $B$ (the convex body of pointwise evaluations). Thanks to this duality, searching for a polytope $Q$ with few facets that is sandwiched between  $  \frac{1}{c} \widetilde{P}os_{n,2d} $  and $\widetilde{P}os_{n,2d}$ (for some constant $c > 1$) is equivalent to searching for a polytope $P$ with few vertices that is sandwiched between $B$ and $cB$. In this section, we show that for any constant $c>1$, a polytope $P$ satisfying 

$$ B   \subseteq  P \subset c^d B$$

\noindent  has to have exponentially many vertices in terms of $n$.

Our result in this section is a direct application of basic properties of the Gaussian measure. Similar inapproximability results for the special case of quadratics  were obtained by Pokutta et al with a completely different approach \cite{pokutta1,pokutta2}.  

We start with presenting  two facts about the Gaussian measure that are going to be used in our proof.  First fact is a tail bound for polynomial maps on Gaussian random variables. 

\begin{lem} \label{1}
Let $f$ be a homogenous degree $2d$ polynomial with $\Int_{S^{n-1}} f(x) \; \sigma(x) = 0$, and let $\gamma_n \sim \mathcal{N}(0,Id)$ be the standard Gaussian measure on $\mathbb{R}^n$. Then, for all $t \geq \sqrt{n+2d}$ we have

$$ \gamma_n \left( \{ x : \abs{f(x)} \geq t^{2d} \norm{f}_2 \} \right) \leq a_0 \exp(-a_1 \frac{t^2}{n+2d}) $$

\noindent where $a_0$ and $a_1$ are positive absolute constants.
\end{lem}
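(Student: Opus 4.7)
The plan is to exploit the $2d$-homogeneity of $f$ to convert the statement about the spherical norm $\norm{f}_2$ into one about the Gaussian $L^2$ norm, and then invoke a standard hypercontractive tail estimate for polynomials on Gaussian space.

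First, I would pass to polar coordinates. Write $G \sim \gamma_n$ as $G = R\theta$ where $R = \norm{G}$ and $\theta = G/R$ is uniform on $S^{n-1}$, independent of $R$. Homogeneity gives $f(G) = R^{2d} f(\theta)$, so Fubini combined with the mean-zero hypothesis yields
$$ \mathbb{E}_{\gamma_n}[f(G)] \;=\; \mathbb{E}[R^{2d}]\cdot \Int_{S^{n-1}} f(\theta)\,\sigma(\theta) \;=\; 0, $$
and
$$ \norm{f}_{L^2(\gamma_n)}^{2} \;=\; \mathbb{E}[R^{4d}] \cdot \norm{f}_2^{2}. $$
Since $R^2\sim \chi_n^2$, the moment is a standard closed form $\mathbb{E}[R^{4d}] = n(n+2)\cdots(n+4d-2)$, which admits the clean upper bound $\mathbb{E}[R^{4d}] \leq (n+4d)^{2d}$.

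Next I would apply Borell's hypercontractive tail inequality for Gaussian polynomials: for any degree-$k$ polynomial $F$ on $(\mathbb{R}^n,\gamma_n)$ centered at $0$ and any $s \geq 1$,
$$ \gamma_n\bigl( |F(G)| \geq s\,\norm{F}_{L^2(\gamma_n)} \bigr) \;\leq\; C\exp\!\bigl(-c\,s^{2/k}\bigr), $$
with universal constants $C, c > 0$. (This follows from the standard estimate $\norm{F}_{L^p}\leq (p-1)^{k/2}\norm{F}_{L^2}$ combined with Markov's inequality and an optimization in $p$.) Applying this with $F = f$ and $k = 2d$ produces a tail of the form $C\exp(-c\,s^{1/d})$.

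Finally I would match thresholds. Pick $s$ so that $s\,\norm{f}_{L^2(\gamma_n)} = t^{2d}\norm{f}_2$, i.e.\ $s = t^{2d}/\sqrt{\mathbb{E}[R^{4d}]}$. The moment bound above gives $\mathbb{E}[R^{4d}]^{1/(2d)} \leq n+4d \leq 2(n+2d)$, hence
$$ s^{1/d} \;\geq\; \frac{t^{2}}{n+4d} \;\geq\; \frac{a_{1}\,t^{2}}{n+2d}, $$
and the hypothesis $t \geq \sqrt{n+2d}$ is exactly what guarantees $s \geq 1$, so the hypercontractive bound is applicable. Absorbing constants into $a_0$ and $a_1$ yields the claim. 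The only genuinely non-elementary ingredient is Borell's inequality; the rest is polar coordinates, a chi-square moment, and bookkeeping, so I do not anticipate a substantial obstacle.
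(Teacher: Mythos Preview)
Your proposal is correct and is essentially the paper's own argument: both compare $\norm{f}_2$ to $\norm{f}_{L^2(\gamma_n)}$ via the chi-square moment (the paper writes it as $2^{2d}\Gamma(\tfrac{n}{2}+2d)/\Gamma(\tfrac{n}{2})$, which is exactly your $\mathbb{E}[R^{4d}]$), then invoke the standard hypercontractive tail bound for degree-$2d$ Gaussian polynomials and rescale $s\to t$. One cosmetic point: for ``$t\geq\sqrt{n+2d}$ forces $s\geq 1$'' to hold literally you should use the sharper AM--GM estimate $\mathbb{E}[R^{4d}]=n(n+2)\cdots(n+4d-2)\leq (n+2d-1)^{2d}$ rather than your $(n+4d)^{2d}$, and this tighter bound is precisely what the paper uses.
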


\begin{proof}
We start by presenting a standard tail estimate for polynomials with normal random variables (see for instance Cor 5.5.7 in \cite{bogachev}). We denote the standard Gaussian measure on $\mathbb{R}^n$ with $\gamma_n$. Then, for a polynomial $f$ with $\Int_{S^{n-1}} f(x) \; \sigma(x) = 0$ the standard tail bound reads as follows.

$$  \gamma_n \left( \{ \abs{f(x)} \geq  s^{2d}  (\Int_{\mathbb{R}^n} f(x)^2 \; \gamma_{n}(x))^{\frac{1}{2}} \} \right) \leq a_0 e^{-a_1 s^2}  $$

\noindent where $a_0,a_1$ are absolute constants. Now we just need to rewrite this estimate with the norm $\norm{.}_2$ of this paper. Let us recall a basic integral identity:

$$ \Int_{S^{n-1}} f(x)^2 \; \sigma(x) = \frac{\Gamma(\frac{n}{2})}{2^{2d} \Gamma(\frac{n}{2}+2d)} \Int_{\mathbb{R}^n} f(x)^2 \; \gamma_{n}(x) $$

\noindent Therefore,  we have 

$$  (\Int_{\mathbb{R}^n} f(x)^2 \; \gamma_{n}(x))^{\frac{1}{2}} = 2^d \norm{f}_2 \left( \frac{\Gamma(\frac{n}{2}+2d)}{\Gamma(\frac{n}{2})} \right)^{\frac{1}{2}} \leq  \norm{f}_2  (n + 2d)^{d}.$$

\noindent Hence, for all $s \geq 1$ we have

$$ \gamma_n \left( \{ \abs{f(x)} \geq s^{2d} (n+ 2d)^d \norm{f}_2 \} \right) \leq  \gamma_n \left( \{ \abs{f(x)} \geq  s^{2d}  (\Int_{\mathbb{R}^n} f(x)^2 \; \gamma_{n}(x))^{\frac{1}{2}} \} \right) \leq a_0 e^{-a_1 s^2}.$$

\noindent Setting $t^2=s^2 (n+2d)$ completes the proof.

\end{proof}

The second fact we are going to use is a standard tail bound for the norm of a Gaussian vector (see for instance \cite{barvinok2}). 

\begin{lem} \label{2}
Let $v$ be a random vector distributed according to standard Gaussian measure $\gamma_n \sim \mathcal{N}(0,1)$ on $\mathbb{R}^n$. Then,
$$ \gamma_n \{ \norm{v}_2^2 \leq (1-\varepsilon)n \} \leq  e^{\frac{-\varepsilon^2n}{4}} .$$
\end{lem}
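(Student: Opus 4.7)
The plan is to use the standard Chernoff method applied to $\|v\|_2^2 = \sum_{i=1}^n v_i^2$, which is a sum of $n$ independent squared standard Gaussians (a chi-squared random variable with $n$ degrees of freedom and mean $n$). Since we are after a lower tail bound, I would introduce a positive parameter $\lambda>0$ and apply Markov's inequality to $e^{-\lambda\|v\|_2^2}$, namely
$$ \gamma_n\{\|v\|_2^2 \leq (1-\varepsilon)n\} = \gamma_n\{e^{-\lambda\|v\|_2^2} \geq e^{-\lambda(1-\varepsilon)n}\} \leq e^{\lambda(1-\varepsilon)n}\,\mathbb{E}\bigl[e^{-\lambda\|v\|_2^2}\bigr]. $$

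Next, I would compute the moment generating function explicitly using independence and the elementary Gaussian integral $\mathbb{E}[e^{-\lambda v_i^2}] = (1+2\lambda)^{-1/2}$, which gives $\mathbb{E}[e^{-\lambda\|v\|_2^2}] = (1+2\lambda)^{-n/2}$. Substituting yields the upper bound
$$ \gamma_n\{\|v\|_2^2 \leq (1-\varepsilon)n\} \leq \exp\!\left(\tfrac{n}{2}\bigl[2\lambda(1-\varepsilon) - \ln(1+2\lambda)\bigr]\right). $$
Then I would optimize over $\lambda>0$; differentiating, the optimal choice is $\lambda^\star = \varepsilon/(2(1-\varepsilon))$, which reduces the exponent to $\tfrac{n}{2}[\ln(1-\varepsilon)+\varepsilon]$.

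To finish, I would use the Taylor estimate $\ln(1-\varepsilon) \leq -\varepsilon - \varepsilon^2/2$, valid for $\varepsilon \in [0,1)$ (obtained by integrating the geometric series bound $1/(1-t)\geq 1+t$ on $[0,\varepsilon]$). This gives $\ln(1-\varepsilon)+\varepsilon \leq -\varepsilon^2/2$, and hence the exponent is at most $-n\varepsilon^2/4$, which is exactly the desired bound. There is no real obstacle here: the proof is entirely mechanical once the Chernoff template is set up, and the only minor point requiring care is choosing the correct form of the Taylor remainder for $\ln(1-\varepsilon)$ so that the constant $1/4$ comes out cleanly rather than something weaker.
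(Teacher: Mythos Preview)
Your argument is correct. The Chernoff bound applied to the chi-squared variable $\|v\|_2^2$ is the standard route, your optimization is right, and the Taylor estimate $\ln(1-\varepsilon)\le -\varepsilon-\varepsilon^2/2$ for $\varepsilon\in[0,1)$ gives exactly the constant $1/4$.

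As for comparison with the paper: there is nothing to compare, since the paper does not prove this lemma. It simply records it as a standard tail bound for the norm of a Gaussian vector and points to a textbook reference \cite{barvinok2}. Your self-contained Chernoff computation is precisely the kind of argument one finds behind such a citation, so you have supplied what the paper chose to omit.
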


Now we are ready to present the main result of this section. 

\begin{thm} \label{inapprox}
Let  $B$ be the body of pointwise evaluations as defined in the previous section, let $c > 1$ be a constant, and assume there exists a polytope $P= conv \{ f_i :  1 \leq  i \leq N \}$ with the following property. 

$$  B  \subseteq P \subset c^d B  $$

\noindent Then, we have

$$\abs{N} \geq a_o e^{a_1\frac{n}{cd}}$$

\noindent where $a_0$ and $a_1$ are absolute constants.
\end{thm}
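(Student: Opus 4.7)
The plan is to exploit the convex-geometric duality established in Section \ref{cgpol}. Let $D$ denote the dimension of $P_{n,2d}$, so that by Lemma \ref{zonal} the body $B$ is the convex hull of the vectors $\phi(v)=p_v-r$, each of common norm $\sqrt{D-1}$ inside $U$. Since $P\subseteq c^d B\subset U$, every vertex $f_i$ satisfies $\langle r,f_i\rangle=0$, so $\langle \phi(v),f_i\rangle=\langle p_v,f_i\rangle=f_i(v)$. The inclusion $B\subseteq P$ allows us to write, for each $v\in S^{n-1}$, $\phi(v)=\sum_i\lambda_{v,i}f_i$ as a convex combination. Taking the inner product with $\phi(v)$ itself gives
\[
D-1=\|\phi(v)\|_2^2=\sum_i\lambda_{v,i}f_i(v),
\]
so there exists $i(v)$ with $f_{i(v)}(v)\geq D-1$.

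Define $S_i=\{v\in S^{n-1}:f_i(v)\geq D-1\}$. The above shows $\bigcup_i S_i=S^{n-1}$, hence $\sum_i\sigma(S_i)\geq 1$ and $\max_i\sigma(S_i)\geq 1/N$. It therefore suffices to prove $\max_i\sigma(S_i)\leq a_0^{-1}e^{-a_1 n/(cd)}$. The inclusion $P\subseteq c^d B$ forces $\|f_i\|_2\leq c^d\sqrt{D-1}$, so $g:=f_i/\|f_i\|_2$ is unit-norm and mean-zero on the sphere, and $v\in S_i$ implies
\[
g(v)\geq \tau:=\sqrt{D-1}/c^d.
\]

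The heart of the proof is a spherical tail bound for the unit-norm mean-zero polynomial $g$. I would pass to the Gaussian setting using $g(X)=\|X\|^{2d}g(V)$ with $X\sim\gamma_n$ and $V=X/\|X\|$. Conditioning on the event $\|X\|^2\geq(1-\varepsilon)n$, which by Lemma \ref{2} fails with probability at most $e^{-\varepsilon^2n/4}$, the spherical event $g(V)\geq\tau$ forces $g(X)\geq\tau((1-\varepsilon)n)^d$. Applying Lemma \ref{1} with $t^{2d}=\tau((1-\varepsilon)n)^d$, equivalently $t^2=\tau^{1/d}(1-\varepsilon)n$, yields
\[
\sigma(S_i)\leq e^{-\varepsilon^2 n/4}+a_0\exp\!\left(-\frac{a_1(1-\varepsilon)\tau^{1/d}n}{n+2d}\right).
\]
The crude binomial estimate $D\geq (n/(2d))^{2d}$ gives $\tau^{1/d}=(D-1)^{1/(2d)}/c\geq n/(Ccd)$ for an absolute constant $C$. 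Choosing $\varepsilon$ as a small absolute constant makes both terms of order $e^{-\Omega(n/(cd))}$, from which the claimed bound $N\geq a_0^{-1}e^{a_1 n/(cd)}$ follows by inversion.

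The main obstacle is the book-keeping: verifying that the hypothesis $t\geq\sqrt{n+2d}$ of Lemma \ref{1} is satisfied throughout the intended regime $n\geq cd$; balancing the two error terms via the parameter $\varepsilon$; and tracking how the dimensional factor $(D-1)^{1/(2d)}$ combines with the inflation factor $1/c$ to produce the promised exponent $n/(cd)$ rather than a weaker rate such as $\sqrt{n/d}/c$ that one would get from a sloppier binomial estimate.
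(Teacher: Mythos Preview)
Your argument is correct and is essentially the paper's proof: both use the union bound over the vertices $f_i$, the norm control $\|f_i\|_2\le c^d\sqrt{D-1}$ coming from $P\subseteq c^dB$, and Lemmas~\ref{1} and~\ref{2} to show each $f_i$ exceeds the threshold on a set of measure $\exp(-\Omega(n/(cd)))$. The only cosmetic difference is packaging---you phrase the union bound as a covering of $S^{n-1}$ by the sets $S_i$ and then lift to Gaussian space, whereas the paper works directly with a Gaussian evaluation point and Barvinok's random-linear-functional framing---but the estimates and parameter choices are the same.
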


\begin{proof}
We use an idea of Barvinok, which is to study the maxima of a linear functional on the polytope to bound the number of its vertices \cite{barvinok1}. Our linear functional will be the pointwise evaluation map $l_v$ at a random Gaussian vector $v \in \mathbb{R}^n$. We aim to use the known properties of the convex body $B$ to arrive to a lower bound on the number of vertices of $P$.  We define a symmetric polytope $\widetilde{P}$ for convenience.

$$\widetilde{P} := conv \{ f_i, -f_i :  1 \leq i \leq N \}$$  

\noindent We observe that

$$   B  \subseteq \widetilde{P}  \subseteq  c^d \binom{n+2d-1}{2d}^{\frac{1}{2}} B_2^{m-1}   $$

\noindent Note that, here the unit ball is defined with respect to the $\norm{.}_2$ norm introduced by the inner product. 

Using the fact that maximum of a linear functional is attained at the vertices of a polytope and performing a basic union bound, we have the following inequality.

$$ \mathbb{P} \{ v:  \max_{f \in  \widetilde{P}  } f(v)  \geq \tau  \}  =  \mathbb{P} \{ v:  \max_{ f \in P} \abs{f(v)}  \geq \tau \} \leq \abs{N} \max_{1 \leq i \leq N} \mathbb{P} \{  v: \abs{f_i(v)} \geq \tau \}  $$

\noindent For any vector $v \in \mathbb{R}^n$, we define $\tilde{v}=\frac{v}{\norm{v}}$. Now using the properties of $p_{\tilde{v}}$ from Lemma \ref{zonal}, we have the following estimate from below. 

$$ \max_{f \in B}  f(v) \geq p_v(v)=\norm{v}^{2d}  p_{\tilde{v}}(\tilde{v}) \geq \binom{n+2d-1}{2d} \norm{v}^{2d}   $$ 

Therefore, we have the following lower bound.

$$  \mathbb{P} \{ v:  \max_{ f \in P} \abs{f(v)}  \geq \tau \}  \geq \mathbb{P} \{ v :  \norm{v}^{2d}  \geq \tau \binom{n+2d-1}{2d}^{-1} \} = \mathbb{P} \{ \norm{v}_2^2 \geq \tau^{\frac{1}{d}} \binom{n+2d-1}{2d}^{-\frac{1}{d}}  \}  $$

\noindent For all $f_i \in P$, we have $\norm{f}_2 \leq  c^d \binom{n+2d-1}{2d}^{\frac{1}{2}}$. Then, Lemma \ref{1} gives the following estimate.

$$ \mathbb{P} \{  v: \abs{f_i(v)} \geq t^{2d} \binom{n+2d-1}{2d}^{\frac{1}{2}}  \} \leq a_0 \exp(-a_1 \frac{t^2}{c(n+2d)})  $$ 

\noindent We set $\tau=t^{2d} \binom{n+2d-1}{2d}^{\frac{1}{2}}$. Then, the very first inequality in this proof gives us the following lower bound on $\abs{N}$. 

$$ \abs{N} \geq \frac{ \mathbb{P} \{ \norm{v}_2^2 \geq t^2 \binom{n+2d-1}{2d}^{-\frac{1}{2d}}  \}}{ \max_{1 \leq i \leq N} \mathbb{P} \{  v: \abs{f_i(v)} \geq t^{2d} \binom{n+2d-1}{2d}^{\frac{1}{2}}  \}}$$

\noindent We choose $t^2=\frac{n(n+2d)}{4ed}$, which ensures 

$$t^2 \binom{n+2d-1}{2d}^{-\frac{1}{2d}} \leq \frac{n}{2} $$

 and also 

$$\frac{t^2}{c(n+2d)} = \frac{n}{4ced}$$ 

\noindent  We obtained an inequality for bounding $\mathbb{P} \{  v: \abs{f_i(v)} \geq t^{2d} \binom{n+2d-1}{2d}^{\frac{1}{2}}  \}$ from above. We also have Lemma \ref{2} to bound $\mathbb{P} \{ \norm{v}_2^2 \geq t^2 \binom{n+2d-1}{2d}^{-\frac{1}{2d}}$ from below. Putting these two inequalities together we obtain the following estimate with $b_0=a_0^{-1}$ and $b_1=\frac{a_1}{4e}$. 

$$ \abs{N} \geq b_0 e^{b_1 \frac{n}{c d}} (1- e^{-\frac{n}{16}})   $$

\end{proof}

\section{Approximation Polytopes}
  
We  start this section with a simple observation: Suppose  that a convex body $K$	and a polytope $P$ satisfy $ P \subset K \subset \alpha P$. Then, for any invertible linear map $T$ we have the following inclusions.
   
   $$ T(P) \subset T(K) \subset \alpha T(P)$$
	
\noindent Therefore, to approximate $K$ with polytopes, we can select a suitable linear map $T$ and approximate the ``easier'' convex body $TK$ instead. 

In this section, we will prove existence of approximation polytopes to $B(E)$ with few facets. Based on the simple observation above, we assume without loss of generality that $B(E)$ is in John's position. Recall that by construction $B(E)$ lies inside the subspace $U(E) := \{ f \in E : \langle f , r \rangle = 0\}$. Therefore, being in John's position means that we have

$$  \frac{1}{m-1} B_2^{m-1} \subseteq  B(E)  \subseteq B_2^{m-1} $$

\noindent and the touching points of $B(E)$ to the unit ball of $U(E)$ form a John's decomposition of identity.

The following theorem is a direct corollary of the work of Friedland and Youssef (see Theorem \ref{pierre}).

\begin{thm} \label{cufcuf}
There exist a universal constant $c$ so that the following holds. For every $\varepsilon > 0$, there exists a multiset of vectors $S \subseteq S^{n-1}$ with $\abs{S} \leq \frac{m-1}{c\varepsilon^2}$ and corresponding polynomials $q_v \in \partial B(E) \cap B_2^{m-1}$ for all $v \in S$, so that 

$$   (1-\varepsilon) \mathbb{I} \preceq \frac{m-1}{\abs{S}} \sum_{ v \in S} q_v \otimes q_v  \preceq (1+ \varepsilon) \mathbb{I} $$

\noindent and 

$$\norm{ \frac{1}{\abs{S}} \sum_{v \in S} q_v } \leq \frac{2\varepsilon}{3\sqrt{m}} $$ 

\noindent where   $\mathbb{I}$ is the identity map on the subspace $U(E)$ of $E$.
\end{thm}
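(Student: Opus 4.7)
The plan is to apply Theorem \ref{pierre} to the John's decomposition of the identity naturally attached to $B(E)$ in John's position. Since $B(E)\subset U(E)$ with $\dim U(E)=m-1$ and $B(E)$ is in John's position, Theorem \ref{john} guarantees that the set of touching points $T:=\partial B(E)\cap \partial B_2^{m-1}$ supports a John's decomposition of the identity on $U(E)$: unit vectors on $\partial B(E)\cap \partial B_2^{m-1}$ with positive weights, summing (as weighted outer products) to $\mathbb{I}$ and (as a weighted sum of the vectors) to $0$. Since $B(E)=conv\{\phi_E(v):v\in S^{n-1}\}$, the extreme points of $B(E)$ lie among the $\phi_E(v)$; as $\partial B_2^{m-1}$ is strictly convex, every touching point is extreme, hence of the form $q_v=\phi_E(v)$ for some $v\in S^{n-1}$ with $\norm{\phi_E(v)}_2=1$.

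Because Theorem \ref{pierre} is stated for finitely supported decompositions, my first step is a Carath\'eodory-type reduction. The symmetric operators on $U(E)$ form a space of dimension $\binom{m}{2}$, and together with the $m-1$ centering constraints, Carath\'eodory's theorem produces a finite John's decomposition $\{(c_i,q_i)\}_{i=1}^M$ with $M=O(m^2)$ atoms, every $q_i\in T$. This reduction depends only on $m$, not on $n$, so the size blow-up is harmless for the subsequent sparsification.

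Next, I apply Theorem \ref{pierre} to this finite decomposition, with ambient dimension $m-1$ in place of $n$. This extracts a multiset $S$ of indices of size at most $\tfrac{m-1}{c'\varepsilon^2}$ for some universal $c'>0$, a mean $u=\tfrac{1}{|S|}\sum_{v\in S} q_v$ satisfying $\norm{u}\leq \tfrac{2\varepsilon}{3\sqrt{m-1}}\leq \tfrac{2\varepsilon}{3\sqrt{m}}$, and the centered sandwich
$$(1-\varepsilon)\mathbb{I}\preceq \frac{m-1}{|S|}\sum_{v\in S}(q_v-u)\otimes (q_v-u)\preceq (1+\varepsilon)\mathbb{I}.$$
Expanding the summand as $\sum_{v\in S} q_v\otimes q_v-|S|\,u\otimes u$ and noting that $(m-1)\norm{u}^2=O(\varepsilon^2)$, replacing the centered sum by $\tfrac{m-1}{|S|}\sum q_v\otimes q_v$ introduces only an $O(\varepsilon^2)$ perturbation in operator norm. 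Absorbing this into the universal constant $c$ and relabeling the selected atoms by their preimages in $S^{n-1}$ yields the claimed bounds.

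I expect the only step requiring genuine care is the Carath\'eodory discretization, which must simultaneously preserve isotropy and centering while keeping all atoms inside $T$; the remainder is a direct black-box application of Friedland and Youssef together with routine centered-versus-uncentered bookkeeping.
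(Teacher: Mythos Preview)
Your approach is essentially the same as the paper's: apply Theorem~\ref{pierre} to the John's decomposition arising from the touching points of $B(E)$ (assumed in John's position) with the unit ball of $U(E)$. You spell out details the paper's one-line proof leaves implicit (the Carath\'eodory discretization, the extreme-point identification of touching points with some $\phi_E(v)$, and the centered-to-uncentered passage); one small slip is the inequality $\tfrac{2\varepsilon}{3\sqrt{m-1}}\leq \tfrac{2\varepsilon}{3\sqrt{m}}$, which goes the wrong way but is harmless after absorbing into the universal constant.
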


\begin{proof}
If $T$ is the map which puts $B(E)$ in John's position, then one can consider $T ( \phi_E(v))$ as the parametrization of $T B(E) \subset \mathbb{R}^{m-1}$ and apply Theorem \ref{pierre}  to the John's decomposition given by the touching points of $ T B(E)$ to the unit ball. 
\end{proof}

\begin{rem}
It is important to notice that the existence of a John's Decomposition of identity with $\frac{(m-1)(m+2)}{2}$ many vectors is already guaranteed by  Fritz John's classical article \cite{john}.   The goal of the spectral sparsification is to reduce the order  of number of  the vectors in the decomposition from quadratic to linear. Hence, all the results in the spectral sparsification literature are effective only for $\frac{1}{\sqrt{m}} \leq \varepsilon \leq 1$. 
\end{rem}

\begin{rem}
Note that $S$ being a  set or a multiset where some elements are counted with multiplicity does not affect the rest of our arguments. Therefore, from this point on we consider $S$ as a set for simplicity.  
\end{rem}

 Using Theorem \ref{cufcuf}, we will first construct a polytope $Q$ with $O(m)$ facets which gives the following rough approximation.

$$  \widetilde{P}os_E - r \subseteq Q \subseteq  m^{\frac{3}{2}}  (\widetilde{P}os_E - r) $$

Then, in the next subsection we will improve the accuracy of approximation by using a ``tensorization'' trick.  We start with a set of unit vectors $S$  with $\abs{S} \leq \frac{m-1}{c \varepsilon^2}$ which satisfies the following. 

$$  (1-\frac{\varepsilon}{4}) \mathbb{I} \prec T=\frac{1}{\abs{S}} \sum_{v \in S} f_v \otimes f_v \prec (1+\frac{\varepsilon}{4}) \mathbb{I} $$

\noindent Existence of such a set $S$ is guaranteed by taking $f_v=\sqrt{m-1}q_v$ in Theorem \ref{cufcuf}.  \\ 

Since $\norm{T- \mathbb{I}} \leq \frac{\varepsilon}{4}$, for any $p \in E$ we have the following upper and lower bounds.  

$$ (1-\varepsilon) \norm{p}^2 \leq \langle T p , p \rangle \leq (1+\varepsilon) \norm{p}^2  $$

\noindent Since $\langle T p , p \rangle = \frac{1}{\abs{S}} \sum_{v \in S} \langle p , f_v \rangle^2$, pigeon hole principle implies that there exists a $v \in S$ such that $ \abs{\langle f_v , p \rangle} \geq (1-\varepsilon) \norm{p} $. Now, we define our approximation polytope $P$ as follows.

$$ P := \{ f_v , -f_v : v \in S \}$$

\noindent By the above observation, for all $f \in E$ we have $ \max_{q \in P} \langle f , q \rangle \geq (1-\varepsilon) \norm{f}$. This implies  $(1-\varepsilon) B_2^{m-1} \subseteq P$. Since $f_v \in \sqrt{m-1}B(E)$ and $B(E)$ is in John's position $\norm{f_v} \leq \sqrt{m-1}$. In summary, we have

$$ (1-\varepsilon) B_2^{m-1} \subseteq P \subseteq \sqrt{m-1} B_2^{m-1} $$

This implies the following inclusion by John's Theorem.

$$ (1-\varepsilon) B(E) \subseteq P \subseteq m^{\frac{3}{2}} B(E) $$

Taking the duals of all sides gives the following.

$$ m^{-\frac{3}{2}} B(E)^{\circ} \subseteq P^{\circ} \subseteq (1-\varepsilon)^{-1} B(E)^{\circ} $$

Setting $Q=-m^{\frac{3}{2}}P^{\circ}$ and using $-B(E)^{\circ}=\widetilde{P}os_E - r$ we have the following inclusions.

$$ \widetilde{P}os_E - r \subseteq Q  \subseteq (1+ 2 \varepsilon) m^{\frac{3}{2}} (\widetilde{P}os_E - r)$$

Note that, by construction $P$ has $O(m)$ vertices and $Q$ has $O(m)$ facets.

\subsection{Improved Accuracy With More Facets}

In this section we will use a standard construction from multilinear algebra, namely the tensor power of a vector space. Tensor powers will help us to produce approximation polytopes with more facets and improved accuracy. Now, let $V$ be a vector space equipped with inner product $\langle \; , \;  \rangle$. For an integer $k \geq 1$, the $k$ th tensor power of $V$ is defined as follows. 

$$ V^{\otimes k} = \underbrace{V \otimes V \otimes V \otimes \ldots \otimes V}_{k \; \text{times}}$$

\noindent It is natural to consider $V^{ \otimes k}$ with the following inner product.

$$  \langle x_1\otimes x_2 \otimes \ldots \otimes x_k , y_1 \otimes y_2 \otimes \ldots \otimes y_k \rangle = \prod_{i}^k \langle x_i , y_i \rangle  $$

\noindent The symmetric part of $V^{\otimes k}$,  $Sym(V^{\otimes k})$   is the subspace spanned by tensors $x^{\otimes k}= x \otimes x \otimes \ldots \otimes x$.  We consider $Sym(E^{k})$, the symmetric part of the $k$ th tensor power of the polynomial space $E$. Recall that, we defined a subspace   $U(E) := \{ f \in E : \langle f , r \rangle = 0 \}$.  Note that $\dim \left( U(E) \right)= m-1$, and $\dim \left( Sym( U(E)^{\otimes k}) \right)= \binom{m+k-2}{k} $.  After this line, we write $U$ for $U(E)$ hoping that no confusion arises. 

We define the following object in $Sym(U^{\otimes k})$.

$$ B(E)^{ \otimes k} : = conv \{  \phi_E(v)^{\otimes k} : v \in S^{n-1} \} $$

\noindent We assume $B(E)^{ \otimes k}$ is in John's position and repeat the reasoning presented in the previous section. This proves existence of a set $S_k \subseteq S^{n-1}$ with $\abs{S_k} \leq \binom{m+k-2}{k} c \varepsilon^{-2}$, such that the following inequalities are satisfied for all $q^{\otimes k} \in U^{\otimes k}$.

$$  (1-\varepsilon) \max_{g^{\otimes k} \in B(E)^{ \otimes k}} \langle g^{\otimes k} , q^{\otimes k} \rangle \leq \max_{v \in S_k } \abs{ \langle f_v^{\otimes k} , q^{\otimes k} \rangle } \leq  \binom{m+k-2}{k}^{\frac{3}{2}} \max_{g^{\otimes k} \in B(E)^{ \otimes k}} \langle g^{\otimes k} , q^{\otimes k} \rangle  $$

We can also write this inequalities as follows.

$$  (1-\varepsilon) \max_{g^{\otimes k} \in B(E)^{ \otimes k}} \langle g , q \rangle^k \leq \max_{v \in S_k } \abs{ \langle f_v , q \rangle}^{k} \leq  \binom{m+k-2}{k}^{\frac{3}{2}} \max_{g^{\otimes k} \in B(E)^{ \otimes k}} \langle g , q \rangle^{k}  $$

\noindent We fix $\varepsilon=\frac{1}{2e}$, and we define $P_k := conv \{ f_v , -f_v : v \in S_k \}$. Then, we have the following inequalities.

$$ (1- \frac{1}{2e})^{\frac{1}{k}} \max_{g \in B(E) } \langle g , q \rangle \leq \max_{g \in P_k}  \langle g , q \rangle \leq  \binom{m+k-2}{k}^{\frac{3}{2k}} \max_{g \in B(E)} \langle g , q \rangle     $$

\noindent  The standard Stirling estimate for binomial coefficients gives $\binom{m+k-2}{k} \leq e^{m-2}(1+\frac{k}{m-2})^{m-2}$. If we set $k=s(m-2)$,  then we have

$$ \binom{m+k-2}{k}^{\frac{3}{2k}} \leq \left( e(1+\frac{k}{m-2}) \right)^{\frac{3(m-2)}{2k}} \leq \left( e(1+s) \right)^{\frac{3}{2s}} .$$ 

\noindent Note also that $(1- \frac{1}{2e})^{-1} \leq 1+\frac{1}{e}$. Since above inequalities on $\max_{P_k} \langle g , q \rangle$ hold for arbitrary $q \in U$ we have the following inclusions. 

$$  (1+\frac{1}{e})^{-\frac{1}{s(m-2)}} B(E) \subseteq  P_k \subseteq \left( e(1+s) \right)^{\frac{3}{2s}} B(E) $$

\noindent We set $Q_k= -  \left( e(1+s) \right)^{\frac{3}{2s}}  P_k^{\circ}$ and we assume $s \geq e$. Note that $(1+\frac{1}{e})^{\frac{1}{s(m-2)}} \leq (\frac{s+1}{e})^{\frac{1}{s}}$. Therefore,

 $$ (1+\frac{1}{e})^{\frac{1}{s(m-2)}} \left( e(1+s) \right)^{\frac{3}{2s}} \leq (s+1)^{\frac{3}{s}} .$$

\noindent Now using the relation $ -B(E)^{\circ} = \widetilde{P}os_E - r$, we conclude 

$$ \widetilde{P}os_E - r \subseteq Q_k \subseteq (1+\frac{k}{m-2})^{\frac{3(m-2)}{k}} (\widetilde{P}os_E - r ).$$

Let us summarize the result of this section in a theorem statement.

\begin{thm}
Let $E \subset P_{n,2d}$ be a linear subspace, let $r=(x_1^2+x_2^2+\ldots +x_n^2)^{d}$ and assume $r \in E$. Let $m=\dim(E)$, assume $m \geq 3$, and let $k$ be an integer with $ k \geq e(m-2)$. Then, there exists a polytope $Q_k$ with $O(k^{m-2})$ facets which satisfies 

$$ \widetilde{P}os_E - r \subseteq Q_k \subseteq (1+\frac{k}{m-2})^{\frac{3(m-2)}{k}} (\widetilde{P}os_E - r )  $$

\noindent In particular, $Q_n$ has $O(n^{m-2})$ many facets and it satisfies

$$ \widetilde{P}os_E - r \subseteq Q_n \subseteq (1+\frac{n}{m})^{\frac{3m}{n}} (\widetilde{P}os_E - r )  $$
\end{thm}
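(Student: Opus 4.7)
The plan is to run the argument from the rough-approximation subsection, but applied not to $B(E)$ itself but to its symmetric $k$-th tensor power $B(E)^{\otimes k} \subset \mathrm{Sym}(U(E)^{\otimes k})$. The key identity $\langle f^{\otimes k}, q^{\otimes k}\rangle = \langle f, q\rangle^k$ means that a sparsified John-type decomposition for $B(E)^{\otimes k}$ translates, after taking $k$-th roots, into a sandwich inclusion for $B(E)$ with a much better approximation ratio than the rough bound $m^{3/2}$, at the cost of a polytope whose vertex count is polynomial in $k$.

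Concretely, I would first observe that $\mathrm{Sym}(U(E)^{\otimes k})$ has dimension $\binom{m+k-2}{k}$ and that $B(E)^{\otimes k}$ lies inside it. Invariance of the approximation problem under invertible linear maps lets us assume $B(E)^{\otimes k}$ is in John's position. Then I would apply Friedland--Youssef (Theorem \ref{pierre}) with a fixed accuracy such as $\varepsilon = \tfrac{1}{2e}$ to the touching points of $B(E)^{\otimes k}$ with the unit ball. This yields a multiset $S_k \subset S^{n-1}$ of size
$$|S_k| \;\leq\; \frac{\binom{m+k-2}{k}-1}{c\varepsilon^{2}} \;=\; O(k^{m-2})$$
(using the Stirling bound $\binom{m+k-2}{k} \le e^{m-2}(1+\tfrac{k}{m-2})^{m-2}$), together with unit vectors $f_v^{\otimes k}$ whose rank-one projections approximate the identity on $\mathrm{Sym}(U(E)^{\otimes k})$ up to factor $1 \pm \tfrac{1}{2e}$. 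Pigeonhole, exactly as in the previous subsection, then gives for every $q \in U(E)$ the two-sided estimate
$$(1-\tfrac{1}{2e})\,h_{B(E)}(q)^{k} \;\leq\; \max_{v\in S_k} |\langle f_v, q\rangle|^{k} \;\leq\; \binom{m+k-2}{k}^{3/2}\, h_{B(E)}(q)^{k},$$
where $h_{B(E)}$ is the support function.

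Taking $k$-th roots and setting $P_k := \mathrm{conv}\{\pm f_v : v \in S_k\} \subset U(E)$, this support-function inequality is equivalent to the geometric inclusion
$$(1-\tfrac{1}{2e})^{1/k}\, B(E) \;\subseteq\; P_k \;\subseteq\; \binom{m+k-2}{k}^{3/(2k)}\, B(E).$$
Now I would dualize and invoke the duality $\widetilde{P}os_E - r = -B(E)^\circ$ from Section \ref{cgpol}: setting $Q_k := -\binom{m+k-2}{k}^{3/(2k)}\, P_k^\circ$ translates the above sandwich into
$$\widetilde{P}os_E - r \;\subseteq\; Q_k \;\subseteq\; (1-\tfrac{1}{2e})^{-1/k}\,\binom{m+k-2}{k}^{3/(2k)}\,(\widetilde{P}os_E - r).$$
Plugging in the Stirling bound and the elementary estimate $(1-\tfrac{1}{2e})^{-1}\leq 1+\tfrac{1}{e}$ (both used verbatim in the preceding subsection) and writing $s = k/(m-2) \geq e$, the combined constant simplifies to $(1+\tfrac{k}{m-2})^{3(m-2)/k}$, which is exactly the claimed ratio. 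Finally, $P_k$ has at most $2|S_k| = O(k^{m-2})$ vertices, so its (normalized) polar $Q_k$ has $O(k^{m-2})$ facets. Specializing to $k = n$ gives the second statement.

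The main obstacle, and also the whole point of introducing the tensor power, is ensuring that the sparsification loss in dimension $\binom{m+k-2}{k}$ can be absorbed after taking $k$-th roots. The delicate bookkeeping is the interplay between the Stirling factor $e^{(m-2)/k}(1+s)^{(m-2)/k}$ coming from the binomial coefficient and the constraint $s \geq e$ that we need to make $(1+\tfrac{1}{e})^{1/(s(m-2))} \leq \bigl(\tfrac{s+1}{e}\bigr)^{1/s}$, allowing the two constants to be bundled into the single clean ratio $(s+1)^{3/s}$. Once this algebraic juggling is done, the geometric content of the proof is essentially just ``apply spectral sparsification to a suitably lifted John decomposition, then dualize.''
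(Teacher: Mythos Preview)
Your proposal is correct and follows essentially the same route as the paper: lift to the symmetric tensor power $B(E)^{\otimes k}\subset \mathrm{Sym}(U(E)^{\otimes k})$, put it in John's position, apply the Friedland--Youssef sparsification with $\varepsilon=\tfrac{1}{2e}$ to obtain $|S_k|=O\!\bigl(\binom{m+k-2}{k}\bigr)=O(k^{m-2})$ vectors, extract $k$-th roots via $\langle f^{\otimes k},q^{\otimes k}\rangle=\langle f,q\rangle^{k}$, and then dualize using $\widetilde{P}os_E-r=-B(E)^{\circ}$. The algebraic clean-up you outline---Stirling on $\binom{m+k-2}{k}$, the bound $(1-\tfrac{1}{2e})^{-1}\le 1+\tfrac{1}{e}$, and the inequality $(1+\tfrac{1}{e})^{1/(s(m-2))}\le(\tfrac{s+1}{e})^{1/s}$ valid for $s=k/(m-2)\ge e$---is exactly what the paper does to collapse the constant to $(1+\tfrac{k}{m-2})^{3(m-2)/k}$.
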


\begin{rem} \label{sasha}
Alexander Barvinok recently pointed out to me his article \cite{barvinok5}. The arguments in \cite{barvinok5} are well optimized for approximating arbitrary convex bodies with arbitrary polytopes. The optimization of the approximation in \cite{barvinok5} is done by using Chebyshev nodes. In our particular case, the order of approximation given by \cite{barvinok5} is not better than results of this article due to large coefficient of symmetry of the cone of nonnegative polynomials \cite{blekherman}. The main difference in our argument is that we do not aim to approximate with arbitrary polytopes, instead we approximate with polytopes created out of pointwise evaluations in $E$.  
\end{rem}
	
\section{Random Approximation Polytopes}
Deterministic construction of the approximation polytopes in previous section includes two main steps: computation of John's Ellipsoid and computation of an approximate decomposition of identity as stated in Theorem \ref{cufcuf}. The computation of John's Ellipsoid is known to be hard \cite{todd}. Therefore,  we will take a different approach in this section and construct approximation polytopes by random sampling. 

If one is equipped with the knowledge that $\norm{P_E(p_v)} \leq a \sqrt{\dim(E)}$ for some universal constant $a$, then one can bypass John's Ellipsoid computation and directly use the theorem of Rudelson  as explained in Remark \ref{rudelson}. However, as we have seen in the third section one does not always have such an upper bound. Hence, different tools are needed. 

We will use a tool coming from computational geometry, namely the $\varepsilon$-net theorem. It is hard to do any justice to beautiful mathematics behind the $\varepsilon$-net theorem in a very limited space. Therefore, we just state a special case of a version of the $\varepsilon$-net theorem due to Koml\'os, Pach and Woeginger \cite{komlos}, and refer the reader to \cite{matousek} and references therein. 
\begin{thm}{($\varepsilon$-net Theorem, special case for halfspaces)} Let $\mathcal{F}$ be a family of halfspaces in $\mathbb{R}^{m-1}$. Let $\mu$ be a probability measure on $\mathbb{R}^{m-1}$. Assume that for all halfspaces $H_{+} \in \mathcal{F}$, we have $ \mu(H_{+}) \geq \varepsilon $ where  $0 < \varepsilon \leq (3e)^{-2}$. 

Let $t=\left\lceil \frac{3m}{\varepsilon} \ln(\frac{1}{\varepsilon}) \right\rceil$, and let $x_1,x_2,\ldots, x_t$ be independent random variables distributed  with respect to $\mu$.  Then, the set $X=\{ x_1, x_2, \ldots, x_t \}$ has a non-empty intersection with all members of $\mathcal{F}$ with probability greater than $1- 4 (9 e^2 \varepsilon)^{m}$.
\end{thm}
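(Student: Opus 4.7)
The plan is to run the classical Haussler--Welzl double sampling argument, tightened so that the stated constants $3$ and $9e^2$ come out. Let $x_1,\ldots,x_{2t}$ be i.i.d.\ draws from $\mu$, split into two halves $X_1=\{x_1,\ldots,x_t\}$ and $X_2=\{x_{t+1},\ldots,x_{2t}\}$. Define the bad event
$$A=\{\exists H\in\mathcal{F}: X_1\cap H=\emptyset\}$$
and the enriched bad event
$$B=\{\exists H\in\mathcal{F}: X_1\cap H=\emptyset \text{ and } |X_2\cap H|\geq \tfrac{\varepsilon t}{2}\}.$$
First I would show $\Pr[A]\leq 2\Pr[B]$: conditional on $A$ with witness $H$, the count $|X_2\cap H|$ is binomial with mean $\mu(H)t\geq \varepsilon t$, and (since $\varepsilon t\geq 3m\ln(1/\varepsilon)\gg 1$) a Chernoff bound gives $\Pr[|X_2\cap H|\geq \varepsilon t/2]\geq 1/2$.

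The next step is symmetrization: condition on the ordered $2t$-tuple $Y=(x_1,\ldots,x_{2t})$ and randomize only over which $t$ of them are assigned to $X_1$. Conditional on $Y$, the split is uniform over the $\binom{2t}{t}$ partitions into equal halves. For a fixed halfspace $H$ with $|H\cap Y|=k\geq \varepsilon t/2$, the conditional probability that all $k$ of those points land in $X_2$ is
$$\frac{\binom{2t-k}{t}}{\binom{2t}{t}}\leq 2^{-k}\leq 2^{-\varepsilon t/2}.$$

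The third step is a counting argument bounding how many distinct traces $H\cap Y$ can occur as $H$ ranges over halfspaces in $\mathbb{R}^{m-1}$. The family of halfspaces has VC dimension $m$, so by Sauer--Shelah the number of distinct dichotomies on $2t$ points is at most $\sum_{i=0}^m\binom{2t}{i}\leq (2et/m)^m$. A union bound over these traces yields
$$\Pr[B\mid Y]\leq\Big(\tfrac{2et}{m}\Big)^m 2^{-\varepsilon t/2},$$
and this bound is independent of $Y$, so it is also the unconditional bound.

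The main obstacle is bookkeeping: plugging in $t=\lceil \tfrac{3m}{\varepsilon}\ln(1/\varepsilon)\rceil$ and checking that $2\bigl(\tfrac{2et}{m}\bigr)^m 2^{-\varepsilon t/2}\leq 4(9e^2\varepsilon)^m$. One computes $\tfrac{2et}{m}\leq \tfrac{6e\ln(1/\varepsilon)}{\varepsilon}$ and $2^{-\varepsilon t/2}=\varepsilon^{3m/(2\ln 2)}$, so the left side is at most $\bigl(\tfrac{6e\ln(1/\varepsilon)}{\varepsilon}\bigr)^m\varepsilon^{3m/(2\ln 2)}$. The hypothesis $\varepsilon\leq(3e)^{-2}$ is precisely what is needed to absorb the $\ln(1/\varepsilon)$ factor and the leftover power of $\varepsilon$ into the base $9e^2$; concretely, using $\ln(1/\varepsilon)\leq \tfrac{3}{2}\varepsilon^{-1/m}\cdot (\text{small})$ and the inequality $9e^2\varepsilon<1$ allows one to collapse the expression to $(9e^2\varepsilon)^m$ after a short case analysis. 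The initial factor of $2$ from the double sampling step combines with the factor of $2$ absorbed in the constants to produce the stated $4$.
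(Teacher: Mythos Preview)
The paper does not give its own proof of this statement: it quotes the result from Koml\'os--Pach--Woeginger and remarks that it is the special case of Lemma~3.2 in Nasz\'odi's article obtained by taking $C=3$, $\delta=4(9e^2\varepsilon)^m$, and using that halfspaces in $\mathbb{R}^{m-1}$ have VC dimension at most $m$. So there is nothing to compare your argument against beyond that citation.

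That said, your outline has a genuine gap in the bookkeeping step, which you yourself flag as the main obstacle and then wave through. The error is in the line $2^{-\varepsilon t/2}=\varepsilon^{3m/(2\ln 2)}$. With $\varepsilon t/2=\tfrac{3m}{2}\ln(1/\varepsilon)$ one has
\[
2^{-\varepsilon t/2}=\exp\Bigl(-\tfrac{3m\ln 2}{2}\ln\tfrac{1}{\varepsilon}\Bigr)=\varepsilon^{(3\ln 2)\,m/2}\approx\varepsilon^{1.04m},
\]
not $\varepsilon^{3m/(2\ln 2)}\approx\varepsilon^{2.16m}$; you have inverted the $\ln 2$. With the correct exponent the Haussler--Welzl bound becomes
\[
2\Bigl(\frac{6e\ln(1/\varepsilon)}{\varepsilon}\Bigr)^m\varepsilon^{1.04m}=2\bigl(6e\ln(1/\varepsilon)\bigr)^m\varepsilon^{0.04m},
\]
and this is \emph{not} dominated by $4(9e^2\varepsilon)^m$: that would force roughly $\ln(1/\varepsilon)\lesssim\varepsilon^{0.96}$, which is false for small $\varepsilon$. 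Your mistaken exponent $2.16m$ is exactly what made the final inequality appear to close. The crude $2^{-k}$ estimate in the symmetrization step is too lossy for these constants; to recover the precise bound stated one needs the sharper analysis carried out in the cited references rather than the textbook Haussler--Welzl argument.
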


Nasz\'odi's recent article \cite{naszodi} was a source of inspiration for the ideas developed in this section. One can obtain the above theorem as a special case of Lemma 3.2 in Naszodi's article by setting $C=3$, $\delta=4 (9 e^2 \varepsilon)^{m}$, and using the fact that a collection of halfspaces in $\mathbb{R}^{m-1}$ has VC-dimension at most $m$.

To use the $\varepsilon$-net theorem we need to specify a probability measure on our space. We will use a modified version of the  probability measure defined in the third section. Let us recall how the construction works. We have $E \subseteq P_{n,2d}$ with $\dim(E)=m$ and $r=(x_1^2+x_2^2+\ldots+x_n^2)^d \in E$. Then, we define a subspace $U$ as $U := \{ f \in E : \langle f ,r \rangle= 0\}$. We showed that for every $v \in S^{n-1}$, there exist a polynomial that we denoted by $\Pi_E(p_v-r) \in U$  such that for all $f \in U$,  we have $f(v)=\langle f , \Pi_E(p_v-r) \rangle$. Then, we defined a map $\phi_E$ as follows. 

$$ \phi_E:  S^{n-1} \rightarrow E $$
$$  \phi_E(v)= \Pi_E(p_v-r) $$

\noindent Out of this map, we created the following convex body.

$$ B(E) := conv \{ \phi_E(v) : v \in S^{n-1} \}$$

\noindent We showed that $B(E) \subset U$ is dual to a base of the cone of nonnegative polynomials, hence our objective is to approximate $B(E)$ with polytopes having as few vertices as possible. 

Now we define the probability measure that is going to be used in this section. Let $\lambda$ be the uniform measure on $\Delta^m$ where $\Delta^m$ is defined as follows. 

$$ \Delta^m := \{ x \in \mathbb{R}^m : x_i \geq 0 \; \text{and} \; \sum_{i} x_i = 1  \}$$

Now, let $T^{m} = S^{n-1} \times S^{n-1} \times \ldots \times S^{n-1}$ be the cartesian product of $S^{n-1}$ with itself $m$-times and let $\sigma_m = \sigma \times \sigma \times \ldots \times \sigma$ be the product measure on $T^{m}$ where $\sigma$ is the uniform measure on $S^{n-1}$. Let $\Psi$ be a map from $\Delta^{m} \times T^{m}$ to $U$ defined as follows.

$$ \Psi : \Delta^{m} \times T^{m} \rightarrow U$$

$$ \Psi( a_1, a_2, \ldots, a_m , v_1 , v_2, \ldots, v_m ) = \sum_{i=1}^m a_i \phi_E(v_i) $$

\noindent We define $\mu$ to be the pushforward measure of $\lambda \times \sigma_m$ under $\Psi$. $\mu$ is clearly a probability measure, and it is supported on $B(E)$ i.e., $\mu(B(E))=1$.

Now we need to pick a special family of halfspaces and show that their measure is bounded from below by a certain threshold. We consider the family of halfspaces defined by supporting hyperplanes of $\alpha B(E)$ for a fixed $0 < \alpha \leq 1$.

$$ \mathcal{F_{\alpha}}:= \{ H^{+} : H \; \text{is a supporting hyperplane of} \; \alpha B(E) \}$$

\noindent Suppose we have a set $V = \{v_1,v_2, \ldots, v_t \} \subseteq B(E) $ where for all halfspaces $H^{+} \in \mathcal{F_{\alpha}}$ there exist a $v_i \in H^{+}$. This would imply the following inclusions.

$$ \alpha B(E) \subseteq conv\{ f_1, f_2, \ldots , f_t\} \subseteq B(E) $$

\noindent Our goal in the rest of this section is to use this observation to construct random approximation polytopes to $B(E)$. We first need  some preparatory lemmas.  

\begin{lem} \label{meh}
Let $f \in P_{n,2d}$ be a polynomial with $\Int_{S^{n-1}} f(x) \; \sigma(x) = 0$ where $\sigma$ is the uniform measure on $S^{n-1}$. Then, we have

$$ \sigma( \{ x : f(x) > 0 \} ) \geq  \frac{\norm{f}_1^2}{4\norm{f}_2^2}$$
\end{lem}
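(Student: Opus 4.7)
The plan is to exploit the mean-zero hypothesis to pin down the integral of $f$ over the positive part, and then bound this integral from above by Cauchy--Schwarz to extract a lower bound on the measure of the positive part.

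First I would split the sphere into $A := \{x \in S^{n-1} : f(x) > 0\}$ and its complement $B := \{x : f(x) \leq 0\}$. The hypothesis $\int_{S^{n-1}} f \, \sigma = 0$ gives
$$\int_A f(x) \, \sigma(x) = -\int_B f(x) \, \sigma(x) = \int_B |f(x)| \, \sigma(x),$$
and adding these two equal quantities produces the whole of $\|f\|_1$. Consequently
$$\int_A f(x) \, \sigma(x) = \frac{1}{2} \|f\|_1.$$

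Next I would apply the Cauchy--Schwarz inequality to the left-hand side, pairing $f \cdot \mathbf{1}_A$ against $\mathbf{1}_A$:
$$\int_A f(x) \, \sigma(x) \leq \left( \int_A f(x)^2 \, \sigma(x) \right)^{1/2} \sigma(A)^{1/2} \leq \|f\|_2 \, \sigma(A)^{1/2}.$$
Combining this with the identity from the previous step gives
$$\frac{1}{2} \|f\|_1 \leq \|f\|_2 \, \sigma(A)^{1/2},$$
which rearranges to the desired bound $\sigma(A) \geq \|f\|_1^2 / (4 \|f\|_2^2)$.

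There is essentially no obstacle here; the statement is a textbook Paley--Zygmund / one-sided Chebyshev style estimate, and the only small point to verify is that the mean-zero hypothesis really does force the integrals over $A$ and $B$ to balance exactly, so that the factor of $1/2$ in $\int_A f \, \sigma = \|f\|_1/2$ is correct. Since $f$ is a polynomial, the boundary $\{f = 0\}$ has $\sigma$-measure zero and one does not need to worry about whether it is included in $A$ or $B$.
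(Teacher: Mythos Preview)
Your proof is correct and is essentially the same as the paper's: the paper names the Paley--Zygmund inequality applied to $f_{+}$, and your Cauchy--Schwarz step is exactly the $\theta=0$ case of that inequality unpacked, with the same identification $\int_A f = \tfrac{1}{2}\norm{f}_1$ and the same bound $\int_A f^2 \leq \norm{f}_2^2$.
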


\begin{proof}
We define a function $f_{+}$ as follows.
\begin{equation*}
  f_{+}(x)=\left\{
  \begin{array}{@{}ll@{}}
    f(x) & \text{if}\ f(x) > 0 \\
    0  & \text{otherwise}
  \end{array}\right.
\end{equation*}

\noindent Now consider $f_{+}(x)$ as a random variable where $x$ is distributed according to $\sigma$. Clearly $f_{+}(x) \geq 0$ for all $x \in S^{n-1}$, and we also have 

$$ \sigma( \{ x : f(x) > 0 \} ) = \sigma( \{ x : f_{+}(x) > 0 \} ) .$$

\noindent We apply Paley-Zygmund inequality to $f_{+}(x)$, which gives the following. 

$$ \sigma( \{ x : f_{+}(x) > 0 \} ) \geq \frac{(\mathbb{E} f_{+}(x))^{2}}{\mathbb{E} f_{+}(x)^2} $$

\noindent Observe that, $\Int_{S^{n-1}} f(x) \; \sigma(x) = 0$ implies $2 \Int_{S^{n-1}} f_{+}(x) \; \sigma(x)=\norm{f}_1$. Also we trivially have $\mathbb{E} f_{+}(x)^2 \leq \norm{f}_2^2$. 

\end{proof}
\noindent As a corollary of this lemma, we have the following lower bound on the measure of halfspaces from $\mathcal{F_{\alpha}}$. 

\begin{cor} \label{ok}
Let $H_{+} \in \mathcal{F_{\alpha}}$ be a halfspace defined by an $f \in U$ as follows.

$$H_{+} := \{ g \in U : \langle f , g \rangle \geq \alpha \max_{v \in S^{n-1}} f(v) \} $$

Then, we have

$$ \mu(H_{+}) \geq  \left( \frac{(1-\alpha) \norm{f}_1^2}{4\norm{f}_2^2} \right)^{m} $$.
\end{cor}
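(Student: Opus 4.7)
The plan is to translate the halfspace condition defining $H_+$ into a probability on $\Delta^m \times (S^{n-1})^m$, and then to reduce the bound to a single Paley--Zygmund-type estimate on the sphere. The reformulation uses the identity $\langle f, \phi_E(v)\rangle = f(v)$, valid for $f \in U$ since $\langle f, r\rangle = 0$ and $\langle f, p_v\rangle = f(v)$. Writing $M := \max_{v \in S^{n-1}} f(v)$, the event $\Psi(a,v) \in H_+$ becomes $\sum_{i=1}^m a_i f(v_i) \geq \alpha M$, so that
$$ \mu(H_+) \;=\; (\lambda \times \sigma_m)\!\left\{ (a,v) : \sum_{i=1}^m a_i f(v_i) \geq \alpha M \right\}. $$

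The key observation is a simple sufficient event: if every sampled point $v_i$ lies in the super-level set $S_\alpha := \{v \in S^{n-1} : f(v) \geq \alpha M\}$, then $\sum a_i f(v_i) \geq \alpha M \cdot \sum a_i = \alpha M$ automatically, independent of the simplex weights $a$. Since the $v_i$ are independent and distributed according to $\sigma$, this yields $\mu(H_+) \geq \sigma(S_\alpha)^m$, reducing the problem to the pointwise estimate
$$ \sigma(S_\alpha) \;\geq\; \frac{(1-\alpha)\,\norm{f}_1^2}{4\,\norm{f}_2^2}. $$

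To establish this pointwise estimate I would adapt the Paley--Zygmund argument of Lemma \ref{meh}, but applied to the shifted positive part $h_+(v) := \max(f(v)-\alpha M, 0)$, whose support is exactly $S_\alpha$. The second moment is controlled by $\int h_+^2 \leq \int_{S_\alpha} f^2 \leq \norm{f}_2^2$, using $0 \leq f - \alpha M \leq f$ on $S_\alpha$. For the first moment, the mean-zero hypothesis gives $\int_A f = \norm{f}_1/2$ with $A := \{f>0\}$; decomposing $A = S_\alpha \sqcup (A\setminus S_\alpha)$ and using $\int_{A\setminus S_\alpha} f \leq \alpha M\,\sigma(A\setminus S_\alpha)$ provides a lower bound on $\int h_+ = \int_{S_\alpha}(f - \alpha M)$. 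Paley--Zygmund with $\theta = 0$ then yields $\sigma(S_\alpha) \geq (\int h_+)^2/\int h_+^2$.

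The main obstacle is extracting the exact $(1-\alpha)$ factor: a direct first-moment bound combined with Cauchy--Schwarz only produces an estimate of the shape $(\norm{f}_1/2 - \alpha M)^2/\norm{f}_2^2$, and aligning this with the target $(1-\alpha)\norm{f}_1^2/(4\norm{f}_2^2)$ requires exploiting the constraint $\norm{f}_1 \leq 2M$ (which itself comes from $\int_A f \leq M\,\sigma(A) \leq M$). I expect the cleanest resolution is to couple the sphere-side sufficient event with the marginal distribution of $a_1$ on $\Delta^m$---whose density is $(m-1)(1-t)^{m-2}$ on $[0,1]$, giving $(\lambda)(a_1 \geq \alpha) = (1-\alpha)^{m-1}$---so that the $(1-\alpha)$ factor is absorbed partly by the simplex randomness, letting one require only a weaker level-set bound on the sphere side and close the gap.
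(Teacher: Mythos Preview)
Your primary route fails at the pointwise step: the inequality $\sigma(S_\alpha)\geq(1-\alpha)\norm{f}_1^2/(4\norm{f}_2^2)$ is false for polynomials. Take $d=1$ and $f(x)=x_1^2-\tfrac{1}{n}$ on $S^{n-1}$. Then $M=1-\tfrac1n$, and for any fixed $\alpha\in(0,1)$ the super-level set $S_\alpha$ is essentially the cap $\{x_1^2\geq\alpha\}$, whose $\sigma$-measure decays exponentially in $n$. Meanwhile Lemma~\ref{yep} forces $\norm{f}_1^2/\norm{f}_2^2\geq 2^{-2d}=\tfrac14$, so the right-hand side is at least $(1-\alpha)/16$ uniformly in $n$. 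No Paley--Zygmund refinement on $(f-\alpha M)_+$ can repair this: the target inequality is wrong by an exponential factor, not merely hard to reach with the stated constant. The underlying reason is that the level-$\alpha M$ set can shrink without bound while $\norm{f}_1/\norm{f}_2$ stays pinned away from zero by reverse H\"older.

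Your closing instinct---move the $(1-\alpha)$ onto the simplex randomness and ask only for the zero-level set on the sphere---is exactly the paper's line. The paper never touches $S_\alpha$: it works with $A_f=\{f>0\}$, to which Lemma~\ref{meh} applies verbatim to give $\sigma(A_f)\geq\norm{f}_1^2/(4\norm{f}_2^2)$, and it pulls out $(1-\alpha)^{m-1}$ by a convexity argument. Fixing a maximizer $v_0$ with $f(v_0)=M$, one has $\alpha\,\phi_E(v_0)+(1-\alpha)\bigl(B(E)\cap\{\langle f,\cdot\rangle\geq 0\}\bigr)\subseteq H_+$, and the paper bounds the $\mu$-mass of this translated $(1-\alpha)$-homothet from below by $(1-\alpha)^{m-1}\sigma(A_f)^m$. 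Note that your specific marginal idea---conditioning on $\{a_1\geq\alpha\}$---does not by itself suffice, since you would then still need $f(v_1)$ near $M$, which is again a rare event; the paper's device plants the fixed extreme point $\phi_E(v_0)$ deterministically through the convex-combination structure rather than trying to sample it.
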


\begin{proof}
First, let us note that 
$$ \max_{g \in \alpha B(E)} \langle f , g \rangle=\alpha \max_{v \in S^{n-1}} f(v)$$ 

\noindent Therefore, all the supporting hyperplanes of $\alpha B(E)$ will be in the format considered in the corollary statement. Now let $H_{+}^{1}$ be the translate of $H_{+}$ supporting $B(E)$, and let $\phi_E(v_0) \in \partial B(E) \cap H_{+}^{1}$. Also let $H_{+}^{0}$ be the translate of $H_{+}$ passing through the origin. Then $\alpha \phi_E(v_0) + (1-\alpha) (B(E) \cap H_{+}^{0}) $ is included inside $B(E) \cap H_{+}$ due to convexity. 

Now let $A_f := \{ v \in S^{n-1} : f(v) \geq 0 \}$, and let $A_f^m = A_f \times A_f \times \ldots \times A_f$ be the cartesian product of $A_f$ with itself $m$-times. Then, the image of $(1-\alpha) \Delta^m \times A_f^m$ under the map $\Psi$ is included in $(1-\alpha) (B(E) \cap H_{+}^{0})$. By the definition of the measure $\mu$, we then have the following lower bound.

$$ \mu(H_{+}) \geq \mu \left( \alpha \phi_E(v_0) +(1-\alpha) (B(E) \cap H_{+}^{0}) \right) \geq (1-\alpha)^{m-1} \sigma(A_f)^m \geq (1-\alpha)^{m} \sigma(A_f)^m $$

\noindent Using Lemma \ref{meh} completes the proof. 

$$ \mu(H_{+})  \geq \left( \frac{(1-\alpha)\norm{f}_1^2}{4\norm{f}_2^2} \right)^{m}  $$

\end{proof}
Now we are ready to state and prove the main result of this section. 

\begin{thm} \label{epsilon}
Let $E \subset P_{n,2d}$ be a subspace with $r \in E$ and $\dim(E)=m$. Let $B(E)$ and the measure $\mu$ be as defined above. We set $M(E)=\max_{f \in E} \frac{\norm{f}_2^2}{\norm{f}_1^2}$. Now let $t=\left\lceil 27 e^2 m^2 (\frac{ M(E)}{1-\alpha})^m  \ln(\frac{M(E)}{1-\alpha}) \right\rceil$, and let $x_1,x_2, \ldots,x_t$ be independent random vectors in $E$ distributed according to $\mu$. Then,

$$ \alpha B(E) \subseteq conv \{ x_1, x_2, \ldots, x_t \} \subseteq B(E) $$

\noindent with probability at least $1-4(\frac{1-\alpha}{M(E)})^{m^2}$.
\end{thm}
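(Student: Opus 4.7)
The plan is to apply the $\varepsilon$-net theorem stated at the start of this section, using Corollary \ref{ok} as the input that certifies the measure lower bound on the relevant family of halfspaces.

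First I would dispose of the upper inclusion: since $\mu$ is the pushforward of a probability measure under the map $\Psi$ into $B(E)$ (which is convex), we have $\mu(B(E))=1$, so $x_1,\ldots,x_t\in B(E)$ almost surely and therefore $\mathrm{conv}\{x_1,\ldots,x_t\}\subseteq B(E)$ deterministically on this event. The substantive content is the lower inclusion $\alpha B(E)\subseteq \mathrm{conv}\{x_i\}$, and for this I would use the geometric reduction already recorded before Lemma \ref{meh}: if some $x_i$ belongs to every halfspace $H_+\in\mathcal{F}_\alpha$, then $\alpha B(E)\subseteq\mathrm{conv}\{x_i\}$. The one-line justification is that the opposite inclusion would give, via separation, a supporting hyperplane of $\alpha B(E)$ whose outer halfspace contains none of the $x_i$, contradicting the covering property.

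Next I would uniformly bound $\mu(H_+)$ from below for $H_+\in\mathcal{F}_\alpha$. By Corollary \ref{ok}, any such $H_+$ defined by $f\in U$ satisfies
$$\mu(H_+)\;\geq\;\left(\frac{(1-\alpha)\norm{f}_1^2}{4\norm{f}_2^2}\right)^{m}\;\geq\;\left(\frac{1-\alpha}{4\,M(E)}\right)^{m}\;=:\;\varepsilon,$$
where the second inequality is just the definition of $M(E)$ and is uniform in $f$. Note that $B(E)\subset U$ and $\dim(U)=m-1$, so halfspaces restricted to $U$ form a VC class of dimension $m$, which is exactly the hypothesis of the $\varepsilon$-net theorem in $\mathbb{R}^{m-1}$.

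Finally I would apply the $\varepsilon$-net theorem with this $\varepsilon$ and the family $\mathcal{F}_\alpha$: $t=\lceil (3m/\varepsilon)\ln(1/\varepsilon)\rceil$ samples suffice to hit every $H_+\in\mathcal{F}_\alpha$ with probability at least $1-4(9e^2\varepsilon)^m$. Substituting $\varepsilon=((1-\alpha)/(4M(E)))^{m}$ gives $(3m/\varepsilon)\ln(1/\varepsilon)=3m^2\cdot 4^m(M(E)/(1-\alpha))^{m}\ln(4M(E)/(1-\alpha))$; absorbing the $4^m$ and the logarithmic $\ln 4$ into the leading constant produces the stated $t=O\!\bigl(m^2(M(E)/(1-\alpha))^{m}\ln(M(E)/(1-\alpha))\bigr)$. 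For the failure probability, $4(9e^2\varepsilon)^m=4(9e^2/4^m)^m\,((1-\alpha)/M(E))^{m^2}$, and for $m\geq 4$ (or even a little less once $M(E)\geq 2$, which is the case of interest since $M(E)\geq 1$ always and the bound $M(E)\leq 2^{2d}$ is used later) the prefactor is $\leq 1$, yielding the stated $4((1-\alpha)/M(E))^{m^2}$.

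I do not anticipate a real obstacle: the serious geometric input, namely the uniform $\mu$-mass lower bound on $\mathcal{F}_\alpha$, has been isolated as Corollary \ref{ok} via Paley--Zygmund, and after that the argument is a routine invocation of the $\varepsilon$-net theorem. The only points that require minor care are checking the threshold $\varepsilon\leq(3e)^{-2}$ needed for that theorem (automatic once $M(E)/(1-\alpha)$ is moderately large, which is the regime of the statement) and tracking constants, which is a matter of bookkeeping rather than new ideas.
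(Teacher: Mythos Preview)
Your approach is exactly the paper's: bound $\mu(H_+)$ uniformly over $\mathcal{F}_\alpha$ via Corollary~\ref{ok}, then invoke the $\varepsilon$-net theorem for halfspaces in $U\cong\mathbb{R}^{m-1}$. The only discrepancy is in the choice of $\varepsilon$. The paper does \emph{not} take $\varepsilon=\bigl((1-\alpha)/(4M(E))\bigr)^m$ as you do; it sets $\varepsilon=(1-\alpha)^m/\bigl(9e^2 M(E)^m\bigr)$, so that $9e^2\varepsilon=\bigl((1-\alpha)/M(E)\bigr)^m$ and the failure bound $4(9e^2\varepsilon)^m$ becomes exactly $4\bigl((1-\alpha)/M(E)\bigr)^{m^2}$, while $3m\varepsilon^{-1}\ln(1/\varepsilon)$ produces the stated $27e^2 m^2\bigl(M(E)/(1-\alpha)\bigr)^m\ln\bigl(M(E)/(1-\alpha)\bigr)$ with no stray $4^m$.

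Your sentence ``absorbing the $4^m$ \ldots\ into the leading constant'' is the one point that is not right: $4^m$ depends on $m$ and cannot be hidden in an $O(\cdot)$ constant, so with your $\varepsilon$ you do not recover the stated value of $t$. (One may note, conversely, that the paper's larger choice of $\varepsilon$ exceeds the bound furnished by Corollary~\ref{ok} once $4^m>9e^2$, so the verification that $\mu(H_+)\ge\varepsilon$ is itself loose there; but the intended argument is the one above, and modulo this constant-tracking your write-up and the paper's coincide.)
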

\begin{proof}
We will use the $\varepsilon$-net theorem with the measure $\mu$ and the family of halfspaces $\mathcal{F}_{\alpha}$. From Corollary \ref{ok},  we have that for all $H_{+} \in \mathcal{F}_{\alpha}$, $\mu(H_{+}) \geq (\frac{1-\alpha}{4M(E)})^m$. We set $\varepsilon=\frac{(1-\alpha)^m}{9e^2M(E)^m}$ for which we clearly have $9e^2\varepsilon \leq 1$, and also $\mu(H_{+}) \geq \varepsilon$ for all $H_{+} \in \mathcal{F}_{\alpha}$. Now, let $t=\left\lceil \frac{3m}{\varepsilon} \ln(\frac{1}{\varepsilon}) \right\rceil$ and let $x_1,x_2,\ldots,x_t$ be independent random vectors distributed according to $\mu$. Then, the $\varepsilon$-net theorem yields that $X= \{ x_1, x_2, \ldots, x_t \}$ is a transversal of $\mathcal{F}_{\alpha}$ with probability at least $1-4(9e^2\varepsilon)^{m}$.
\end{proof}

Below is the restatement of the theorem with dual convex bodies.

\begin{cor}

Let $E \subset P_{n,2d}$ be a subspace with $r \in E$ and $\dim(E)=m$. Let $\mu$ be the measure as defined above. We set $M(E)=\max_{f \in E} \frac{\norm{f}_2^2}{\norm{f}_1^2}$. For a given $\alpha$ with $0 < \alpha \leq 1$, we set $t=O( m^2 (\frac{ M(E)}{1-\alpha})^m \ln(\frac{M(E)}{1-\alpha}))$. Let $x_1,x_2,\ldots,x_t$ be independent random vectors distributed with $\mu$. We define the polytope $K_{\alpha}= \{ y \in U : \langle y , x_i \rangle \geq -1 \; \text{for all} \; 1 \leq i \leq t \}$. Then $K_{\alpha}$ satisfies

$$ \widetilde{P}os_E - r \subseteq K_{\alpha} \subseteq \frac{1}{\alpha}(\widetilde{P}os_E - r) $$

\noindent with probability at least $1-4(\frac{1-\alpha}{M(E)})^{m^2}$.
\end{cor}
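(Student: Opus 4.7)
The plan is to deduce this corollary essentially as a formal consequence of Theorem \ref{epsilon}, combined with the convex geometric duality $-B(E)^{\circ} = \widetilde{P}os_E - r$ established in Section \ref{cgpol}. So this is not a new result but a reformulation; the work consists entirely in keeping the polar duality and the sign conventions straight.

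First I would invoke Theorem \ref{epsilon} with the stated $t$ and $\alpha$. That yields, with probability at least $1 - 4\left(\frac{1-\alpha}{M(E)}\right)^{m^2}$, the sandwich
$$ \alpha B(E) \;\subseteq\; \mathrm{conv}\{x_1,\ldots,x_t\} \;\subseteq\; B(E), $$
viewed inside the subspace $U = U(E)$. All subsequent reasoning is deterministic on the event where this sandwich holds.

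Next I would take polar duals within $U$, using that polarity reverses inclusions and scales as $(\lambda C)^{\circ} = \lambda^{-1} C^{\circ}$. This gives
$$ B(E)^{\circ} \;\subseteq\; \bigl(\mathrm{conv}\{x_1,\ldots,x_t\}\bigr)^{\circ} \;\subseteq\; \tfrac{1}{\alpha}\, B(E)^{\circ}. $$
The polar of the convex hull of finitely many points is the intersection of the corresponding halfspaces, so
$$ \bigl(\mathrm{conv}\{x_1,\ldots,x_t\}\bigr)^{\circ} = \{ y \in U : \langle y, x_i \rangle \leq 1 \text{ for all } i \}. $$
Negating (i.e.\ applying $y \mapsto -y$, which preserves $U$) transforms this set into exactly $K_{\alpha} = \{ y \in U : \langle y, x_i \rangle \geq -1 \text{ for all } i \}$, and similarly sends the chain of polars to
$$ -B(E)^{\circ} \;\subseteq\; K_{\alpha} \;\subseteq\; \tfrac{1}{\alpha}\, (-B(E)^{\circ}). $$

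Finally I would substitute the identity $-B(E)^{\circ} = \widetilde{P}os_E - r$ from Section \ref{cgpol} to conclude
$$ \widetilde{P}os_E - r \;\subseteq\; K_{\alpha} \;\subseteq\; \tfrac{1}{\alpha}(\widetilde{P}os_E - r), $$
with the probability bound inherited directly from Theorem \ref{epsilon}. There is no genuine obstacle in the argument; the only point requiring a little care is verifying that passing to $-K_\alpha$ indeed realizes the polar of $\mathrm{conv}\{x_i\}$ inside $U$, and that the definition of $K_\alpha$ as an intersection of halfspaces within $U$ matches the formulation in Theorem \ref{intro3} (where the condition $\langle y, r \rangle = 0$ is made explicit rather than absorbed into the ambient space).
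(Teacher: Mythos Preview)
Your proposal is correct and matches the paper's approach exactly: the paper presents this corollary simply as ``the restatement of the theorem with dual convex bodies'' without writing out a proof, and your argument supplies precisely the routine polar-duality computation (inclusion reversal, $(\mathrm{conv}\{x_i\})^{\circ}$ as an intersection of halfspaces, negation, and the identity $-B(E)^{\circ} = \widetilde{P}os_E - r$) that this restatement requires.
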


We do not know any way to bound $M(E)$ from above using the low-dimensionality of $E$. What we can do is to bound $M(P_{n,2d})$ from above in terms of $d$ only.

\begin{lem} \label{yep}
The following holds for all $f \in P_{n,2d}$.
$$ \frac{\norm{f}_2^2}{\norm{f}_1^2} \leq  2^{2d}  $$
\end{lem}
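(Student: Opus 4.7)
The plan is to transfer the spherical inequality to the Gaussian setting, establish its Gaussian counterpart via a reverse-H\"older argument, and then sharpen the constant by direct inspection of the extremal rank-one forms.

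\textbf{Step 1 (Transfer to Gaussian).} Let $X\sim\gamma_n$ be a standard Gaussian vector in $\mathbb{R}^n$. The polar decomposition $X=\|X\|\cdot U$ with $U\sim\sigma$ independent of $\|X\|$, combined with the degree-$2d$ homogeneity of $f$, gives
$$ \|f\|_{L^p(\gamma_n)}^p = \mathbb{E}\|X\|^{2dp}\cdot\|f\|_{L^p(\sigma)}^p. $$
Cauchy--Schwarz (i.e.\ log-convexity of moments) yields $(\mathbb{E}\|X\|^{2d})^2 \le \mathbb{E}\|X\|^{4d}$, so $\mathbb{E}\|X\|^{2d}/\sqrt{\mathbb{E}\|X\|^{4d}}\le 1$, and consequently
$$ \frac{\|f\|_{L^2(\sigma)}}{\|f\|_{L^1(\sigma)}} \le \frac{\|f\|_{L^2(\gamma_n)}}{\|f\|_{L^1(\gamma_n)}}. $$
It therefore suffices to prove $\|f\|_{L^2(\gamma_n)}\le 2^d \|f\|_{L^1(\gamma_n)}$ for every $f\in P_{n,2d}$.

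\textbf{Step 2 (Gaussian reverse H\"older).} I would apply Paley--Zygmund (as in Lemma \ref{meh}) to the nonnegative random variable $f(X)^2$: for a threshold $\theta\in(0,1)$,
$$ \mathbb{P}\bigl(f(X)^2\ge\theta\|f\|_2^2\bigr)\ge (1-\theta)^2\|f\|_2^4/\|f\|_4^4, $$
which delivers $\|f\|_1\ge \sqrt{\theta}(1-\theta)^2\|f\|_2^5/\|f\|_4^4$. Combining with the Nelson--Bonami hypercontractive bound $\|f\|_{L^4(\gamma)}\le 3^d\|f\|_{L^2(\gamma)}$ for degree-$2d$ Gaussian polynomial chaos and optimizing in $\theta$ already yields a dimension-free bound of the form $\|f\|_{L^2(\gamma)}\le C^d\|f\|_{L^1(\gamma)}$ for some absolute constant $C$.

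\textbf{Step 3 (Sharpening to $2^d$).} The sharper constant $2^d$ comes from the rank-one zonal extremizers $f(x)=(v\cdot x)^{2d}$. For these, the Gaussian moments are exact, and the ratio evaluates to $\sqrt{\binom{4d}{2d}/\binom{2d}{d}}$. A one-line induction on $d$ shows $\binom{4d}{2d}/\binom{2d}{d}\le 4^d$: the base case $d=0$ is trivial, and the inductive step multiplies the ratio by $(4d+1)(4d+3)/(2d+1)^2 = 4-(2d+1)^{-2}<4$. Combined with Step~1, this closes the argument and gives $M(E)\le 2^{2d}$.

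\textbf{Main obstacle.} The nontrivial step is proving that the rank-one zonals are indeed extremal for $\|f\|_2/\|f\|_1$ over $P_{n,2d}$. The natural route is a variational argument: $SO(n)$-invariance and continuity on the compact $L^1$-unit sphere of $P_{n,2d}$ force the maximum to be attained, and a first-order stationarity analysis combined with a Waring-type decomposition of $f^2$ should pin the extremizer to the $SO(n)$-orbit of a zonal form. If this step proves too delicate, Steps~1 and~2 alone still produce $M(E)\le C^d$ for some absolute constant $C$, which suffices for the downstream applications in Theorem \ref{intro3}.
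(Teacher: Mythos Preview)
Your Step~1 is exactly the paper's reduction: passing to Gaussian norms via polar coordinates and using log-convexity of moments (the paper writes this as $\Gamma(\tfrac{n}{2}+d)^2\le\Gamma(\tfrac{n}{2})\Gamma(\tfrac{n}{2}+2d)$, which is the same Cauchy--Schwarz inequality you invoke). So the transfer is correct and matches.

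The gap is in Step~3. You never prove that rank-one zonals $f(x)=(v\cdot x)^{2d}$ are the extremizers of $\|f\|_{L^2(\gamma)}/\|f\|_{L^1(\gamma)}$ over $P_{n,2d}$; you only verify the bound for them and then sketch a ``variational argument combined with a Waring-type decomposition'' that is far from a proof. $SO(n)$-invariance tells you the extremizer lies on an $SO(n)$-orbit, but every orbit is $SO(n)$-invariant, so this cuts nothing down; and there is no reason first-order stationarity should single out zonals among all degree-$2d$ forms. Without this step, your argument only delivers the $C^d$ bound from Step~2 (with $C$ coming from hypercontractivity, so $C\ge 9$ after squaring), not the claimed $2^{2d}$.

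The paper sidesteps this entirely: after the reduction of Step~1 it simply quotes the reverse H\"older inequality for Gaussian polynomials due to Prokhorov and Bobkov (references \cite{pro}, \cite{bobkov} in the paper), which states directly that
\[
\frac{\int_{\mathbb{R}^n} f(x)^2\,\gamma_n(x)}{\bigl(\int_{\mathbb{R}^n}|f(x)|\,\gamma_n(x)\bigr)^2}\le 2^{2d}
\]
for every polynomial $f$ of degree at most $2d$. That is the missing ingredient; once you have it, the proof is one line after your Step~1. Your Step~2 is a correct but lossy substitute, and your Step~3 is not needed at all.
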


\begin{proof}

$$ \Int_{\mathbb{R}^n} \abs{f(x)}\; \gamma_n(x) = \frac{\abs{S^{n-1}}}{(2\pi)^{\frac{n}{2}}} \Int_{0}^{\infty} r^{n+2d-1} e^{\frac{-r^2}{2}} \; dr \Int_{S^{n-1}} \abs{f(x)} \; \sigma(x) $$

$$ \Int_{\mathbb{R}^n} \abs{f(x)}\; \gamma_n(x)=  \frac{\abs{S^{n-1}}}{(2\pi)^{\frac{n}{2}}} 2^{\frac{n}{2}+d-1} \Gamma(\frac{n}{2}+d) \Int_{S^{n-1}} \abs{f(x)} \; \sigma(x) $$

Following the same pattern we also derive the following.

$$ \Int_{\mathbb{R}^n} f(x)^2 \; \gamma_n(x) = \frac{\abs{S^{n-1}}}{(2\pi)^{\frac{n}{2}}} 2^{\frac{n}{2}+2d-1} \Gamma(\frac{n}{2}+2d) \Int_{S^{n-1}} f(x)^2 \; \sigma(x)  $$

Also note that, $\frac{\abs{S^{n-1}}}{(2\pi)^{\frac{n}{2}}} 2^{\frac{n}{2}} = \frac{2}{\Gamma(\frac{n}{2})}$. This gives us the following ratio.

$$ \frac{\Int_{\mathbb{R}^n} f(x)^2 \; \gamma_n(x)}{(\Int_{\mathbb{R}^n} \abs{f(x)}\; \gamma_n(x))^2}= \frac{\norm{f}_2^2}{\norm{f}_1^2} \frac{\Gamma(\frac{n}{2}) \Gamma(\frac{n}{2}+2d)}{\Gamma(\frac{n}{2}+d)^2} $$

The reverse H\"older inequalities for polynomials with normal random variables \cite{bobkov,pro}  tell us that the left hand side of the inequality is less than $2^{2d}$. Therefore, we have
$$ \frac{\norm{f}_2^2}{\norm{f}_1^2} \leq  2^{2d} \frac{\Gamma(\frac{n}{2}+d)^2}{\Gamma(\frac{n}{2}) \Gamma(\frac{n}{2}+2d)} \leq 2^{2d} $$

\end{proof} 

\section{Acknowledgements}
I had the idea of Theorem \ref{inapprox} after having a short discussion with Alexander Barvinok  on December 2016 at ICERM, Brown University. I also had a chance to meet with him at  Physikzentrum Bad Honnef on late November 2017. While discussing about this note, Prof. Barvinok pointed out his paper \cite{barvinok5} where he approximates arbitrary convex bodies using spectral sparsification. I included a remark at the end of fifth section section about the comparison of this work with results in \cite{barvinok5}. I would like thank Alexander Barvinok for enjoyable discussions, and to ICERM and Physikzentrum Bad Honnef for their hospitality. I also would like to present my thanks to Cynthia Vinzant and Seth Sullivant for discussions at Raleigh on Fall 2016, and to anonymous referees whose remarks helped to clarify the presentation.

\end{document}